\title{On the Simulation of General Multivariate Gamma Distributions using Dickman Approximations}
\author{Michael Grabchak\footnote{Email address: mgrabcha@charlotte.edu}\ \ and Xingnan Zhang\footnote{Email address: xzhang627@outlook.com}\\\
	{\it University of North Carolina Charlotte}}
\begin{document}

\newtheorem{prop}{Proposition}
\newtheorem{thrm}{Theorem}
\newtheorem{defn}{Definition}
\newtheorem{cor}{Corollary}
\newtheorem{lemma}{Lemma}
\newtheorem{remark}{Remark}
\newtheorem{exam}{Example}
\newtheorem{algo}{Algorithm}

\newcommand{\rd}{\mathrm d}
\newcommand{\GD}{\mathrm{GD}}
\newcommand{\MD}{\mathrm{MD}}
\newcommand{\MVP}{\mathrm{MVP}}
\newcommand{\rD}{\mathrm D}
\newcommand{\rE}{\mathrm E}
\newcommand{\rF}{\mathrm F}
\newcommand{\rP}{\mathrm P}
\newcommand{\rG}{\mathrm G}
\newcommand{\rM}{\mathrm M}
\newcommand{\ts}{\mathrm{TS}^p_{\alpha,d}}
\newcommand{\Exp}{\mathrm{Exp}}
\newcommand{\ID}{\mathrm{ID}}
\newcommand{\tr}{\mathrm{tr}}
\newcommand{\iid}{\stackrel{\mathrm{iid}}{\sim}}
\newcommand{\eqd}{\stackrel{d}{=}}
\newcommand{\approxd}{\stackrel{d}{\approx}}
\newcommand{\cond}{\stackrel{d}{\rightarrow}}
\newcommand{\conv}{\stackrel{v}{\rightarrow}}
\newcommand{\conw}{\stackrel{w}{\rightarrow}}
\newcommand{\conp}{\stackrel{p}{\rightarrow}}
\newcommand{\confdd}{\stackrel{fdd}{\rightarrow}}
\newcommand{\simp}{\stackrel{p}{\sim}}
\maketitle

\setcounter{page}{1}
\pagenumbering{arabic}

\begin{abstract}
We derive a Dickman approximation for the small jumps of a large class of multivariate L\'evy processes. We then apply this approximation to develop a simulation method for the class of general multivariate gamma distributions (GMGD). A small-scale simulation study suggests that this method works very well.\\

\noindent \textbf{Keywords:} Multivariate gamma distributions; Dickman distribution; Small jumps of L\'evy processes; Simulation\\
\noindent \textbf{MSC:} 60G51; 60F05

\end{abstract}

\section{Introduction}

Gamma distributions and their extensions have been widely used in mathematical finance. Perhaps most prominent is the use of variance gamma and bilateral gamma distributions to model financial returns, see, e.g.,\ \cite{Madan:Seneta:1990}, \cite{Cont:Tankov:2004}, \cite{Kuchler:Tappe:2008}, \cite{Sabino:2020}, and the references therein.  In practice, one typically works with a basket of assets and needs a multivariate distribution to jointly model the returns. We focus on the class of general multivariate gamma distributions (GMGD), which is a large class that contains several other models, including the multivariate gamma distributions of \cite{Perez-Abreu:Stelzer:2014} as well as important subclasses of the multivariate Thorin class of generalized gamma convolutions (GGC) \cite{Barndorff-Nielsen:Maejima:Sato:2006} \cite{Laverny:etal:2021} and of the so-called Class M \cite{Maejima:2015}. GMGD can be used to model baskets of returns directly or it can be first combined with Brownian motion through the process of multivariate subordination, see \cite{Barndorff-Nielsen:Pedersen:Sato:2001} or \cite{Buchmann:etal:2017}. Either way, simulation is an important tool to implement Monte Carlo methods for option pricing and risk estimation. In this paper we develop an approximate simulation method for GMGD, which is based on a multivariate Dickman approximation. The idea is to consider a GMGD L\'evy process and to model its small jumps using a multivariate Dickman L\'evy process and its large jumps by a compound Poisson process. We will apply this to option pricing in a future work.

For the purposes of simulation, small jumps of L\'evy processes are often approximated by simpler processes, see, e.g., Chapter 6 in \cite{Cont:Tankov:2004}. The most common process used is Brownian motion, where the approximation is justified by a limit theorem proved in \cite{Asmussen:Rosinski:2001} for the univariate case and extended to the multivariate case in \cite{Cohen:Rosinski:2007}. However, this approximation does not hold for gamma and related distributions. In \cite{Covo:2009} it was shown that, in the univariate case, the small jumps of such L\'evy processes can be approximated by Dickman L\'evy processes. What has been missing from the literature is a Dickman approximation in the multivariate case. In fact, the multivariate Dickman distribution was only recently introduced in \cite{Bhattacharjee:Molchanov:2020} and it was not studied in detail until \cite{Grabchak:Zhang:2024}. In this paper, we prove a limit theorem characterizing when the small jumps of a L\'evy process can be approximated by a multivariate Dickman L\'evy process, and we show that this always holds for GMGD. Furthermore, we develop a methodology for the simulation of the large jumps in this case, which creates a complete methodology for approximate simulation of GMGD.

The rest of this paper is organized as follows. In Section \ref{sec: background} we review basic facts about L\'evy processes and Dickman distributions, and we introduce the slightly more general class of multivariate $\epsilon$-Dickman distributions. In Section \ref{sec: main} we give our main results for approximating the small jumps of certain multivariate L\'evy processes by multivariate Dickman L\'evy processes and discuss how to use this for simulation. In Section \ref{sec: gen gamma} we formally introduce GMGD and give detailed results about simulation using the Dickman approximation. We also perform a small-scale simulation study, which suggests that this method works very well.  Proofs are postponed to Section \ref{sec: proofs}.

Before proceeding, we introduce some notation. We write $\mathbb R^d$ to denote the set of $d$-dimensional column vectors equipped with the usual inner product $\langle\cdot,\cdot\rangle$ and the usual norm $|\cdot|$. We write $\mathbb S^{d-1}=\{x\in\mathbb R^d:|x|=1\}$ to denote the unit sphere in $\mathbb R^d$, and we write $\mathfrak{B}(\mathbb{R}^{d})$ and $\mathfrak{B}(\mathbb{S}^{d-1})$ to denote the Borel sets on $\mathbb R^d$ and $\mathbb S^{d-1}$, respectively. For a distribution $\mu$ on $\mathbb R^d$, we write $X\sim \mu$ to denote that $X$ is a random variable with distribution $\mu$ and $X_1,X_2,\dots\iid \mu$ to denote that $X_1,X_2,\dots$ are independent and identically distributed (iid) random variables with distribution $\mu$. We write $U(a,b)$ to denote the uniform distribution on $(a,b)$, $\Exp(\lambda)$ to denote the exponential distribution with rate $\lambda$, and $\mathrm{Pois}(\lambda)$ to denote the Poisson distribution with mean $\lambda$. We write $1_A$ to denote the indicator function of set $A$ and $\delta_a$ to denote the point-mass at $a$. For any $a\in\mathbb R$ and $B\subset\mathbb R^d$ we write $a B = \{a y: y \in B\}$, and for any $C \subset \mathbb{S}^{d-1}$ and $0\le a<b<\infty$ we write
\begin{equation*}
	(a,b]C = \left\{x \in \mathbb{R}^d:|x| \in (a,b], \frac{x}{|x|} \in C \right\}.
\end{equation*}

\section{Background}\label{sec: background}

In this sections we review basic facts about L\'evy processes and Dickman distributions, and we introduce the slightly more general class of multivariate $\epsilon$-Dickman distributions.

\subsection{L\'evy Processes}

The characteristic function of an infinitely divisible distribution $\mu$ on $\mathbb R^d$ can be written in the form $\hat\mu(z) = \exp\{C_\mu(z)\}$, where
\begin{eqnarray*}\label{eq: char func inf div}
	C_{\mu}(z) =  -\langle z,Az\rangle + i \left\langle b, z \right\rangle +
	\int_{\mathbb{R}^d}\left(e^{i\left\langle z, x\right\rangle } - 1 - i\left\langle z, x\right\rangle 1_{[|x|\le1]}\right) M(\rd x), 
	\ \ \ z \in \mathbb{R}^d,
\end{eqnarray*}
$A$ is a $d\times d$-dimensional covariance matrix called the Gaussian part, $b\in\mathbb R^d$ is the shift, and $M$ is the L\'evy measure, which is a Borel measure on $\mathbb R^d$ satisfying
\begin{equation*}\label{eq: levy measure equation gen}
M(\{0\}) = 0 \text{  and  } \int_{\mathbb{R}^d}(|x|^2 \wedge 1) M(\rd x) < \infty.
\end{equation*}
The parameters $A$, $M$, and $b$ uniquely determine this distribution and we write $\mu=\ID(A,M,b)$. We call $C_\mu$ the cumulant generating function (cgf) of $\mu$. Associated with every infinitely divisible distribution $\mu$ is a L\'evy process $\{X_t:t\ge0\}$, where $X_1\sim\mu$. In this context, the L\'evy measure governs the jumps of the process. Specifically, for any $B\in\mathfrak B(\mathbb R^d)$, $M(B)$ is the expected number of jumps that the process has in the time interval $[0,1]$ that fall inside set $B$. 

A L\'evy process has finite variation if and only if $A=0$ and $M$ satisfies the additional condition 
\begin{equation*}\label{eq: levy measure equation finite var}
\int_{\mathbb{R}^d}(|x| \wedge 1) M(\rd x) < \infty.
\end{equation*}
In this case, the cgf can be written in the form
\begin{eqnarray}\label{eq: char func inf div finite variation}
	C_\mu(z)=  i \left\langle \gamma, z \right\rangle +
	\int_{\mathbb{R}^d}\left(e^{i\left\langle z, x\right\rangle } - 1\right) M(\rd x),
	 \ \ \ z \in \mathbb{R}^d,
\end{eqnarray}
where $\gamma = b-\int_{|x|\le1}x M(\rd x) \in \mathbb{R}^d$ is the drift, and we write $\mu = \ID_0(M, \gamma)$. For more on infinitely divisible distributions and L\'evy processes see \cite{Sato:1999} or \cite{Cont:Tankov:2004}.

When discussing weak convergence of infinitely divisible distributions and L\'evy processes, the concept of vague convergence is fundamental. A portmanteau theorem giving several statements that are equivalent to vague convergence can be found in, e.g.,\ \cite{Barczy:Pap:2006}. The definition is as follows.

\begin{defn}\label{defn: vague conv}
Let $M_0,M_1,M_2,\dots$ be a sequence of L\'evy measures on $\mathbb R^d$. We say that $M_n$ converges vaguely to $M_0$ and write $M_n\conv M_0$ as $n\to\infty$ if $\lim\limits_{n\to\infty} \int_{\mathbb{R}^d} f(x) M_n(\rd x)=\int_{\mathbb{R}^d} f(x) M_0(\rd x)$, for every $f:\mathbb R^d\mapsto\mathbb R$ that is bounded, continuous, and vanishing on a neighborhood of $0$.
\end{defn}

\subsection{Dickman Distributions}

For $\epsilon>0$, a random variable $X$ on $\mathbb R$ is said to have a generalized $\epsilon$-Dickman distribution if
$$
X\eqd U^{1/\theta}(X+\epsilon),
$$
where $\theta\ge0$ and $U\sim U(0,1)$ is independent of $X$ on the right side. We denote this distribution by $\GD^\epsilon(\theta)$. For $\theta=0$, we interpret $\GD^\epsilon(0)$ as the distribution concentrated at $0$. This is motivated by the fact that, since $U\in(0,1)$ with probability $1$, we have $U^{1/\theta}\to0$ with probability $1$ as $\theta\downarrow0$. When $\epsilon=1$, the distribution $\GD^1(\theta)$ is called a generalized Dickman distribution and when both $\epsilon=1$ and $\theta=1$, it is just called the Dickman distribution. Many properties and applications for the case $\epsilon=1$ are discussed in the surveys \cite{Penrose:Wade:2004}, \cite{Molchanov:Panov:2020},  \cite{Grabchak:Molchanov:Panov:2022}, and the references therein. The case $\epsilon\ne1$ is discussed in \cite{Grabchak:2025} and \cite{Grahovac:etal:2024}. A multivariate generalization of the Dickman distribution was recently introduced in \cite{Bhattacharjee:Molchanov:2020} and it was further studied in \cite{Grabchak:Zhang:2024}, where many properties were derived and several approaches for simulation were developed. We now introduce a slight generalization of this model.

For $\epsilon>0$, a random variable $X$ on $\mathbb R^d$ is said to have a multivariate $\epsilon$-Dickman distribution if
\begin{eqnarray}\label{eq: relation for MD eps}
X\eqd U^{1/\theta}(X+ \epsilon \xi),
\end{eqnarray}
where $\theta\ge0$ and $X, \xi, U$ are independent on the right side with $U\sim U(0,1)$ and $\xi\sim \sigma_1$ for some probability distribution $\sigma_1$ on $\mathbb S^{d-1}$. Again, for $\theta=0$, we interpret $X$ as having a distribution concentrated on $0\in\mathbb R^d$, and note that, in this case, the distribution $\sigma_1$ does not matter. Let $\sigma=\theta\sigma_1$ and note that $\theta=\sigma(\mathbb S^{d-1})$ and that for $\theta\ne0$, $\sigma_1 = \sigma/\sigma(\mathbb S^{d-1})$. Thus, there is no loss of information when working with $\sigma$ instead of $\theta$ and $\sigma_1$. We write $\MD^\epsilon(\sigma)$ to denote this distribution and we refer to $\sigma$ as the spectral measure. It is readily checked that any nonzero finite Borel measure on $\mathbb S^{d-1}$ can serve as the spectral measure of a multivariate $\epsilon$-Dickman distribution. When $\epsilon=1$, $\MD^1(\sigma)$ reduces to the multivariate Dickman distribution studied in \cite{Bhattacharjee:Molchanov:2020} and \cite{Grabchak:Zhang:2024} and, for simplicity of notation, we write $\MD(\sigma)$ in this case.  The generalized $\epsilon$-Dickman distribution $\GD^\epsilon(\theta)$ corresponds to $\MD^\epsilon(\sigma)$ with dimension $d=1$ and $\sigma=\theta\delta_1$. From \eqref{eq: relation for MD eps} it is easily checked that  $\epsilon$-multivariate Dickman distributions belong to the class of multivariate Vervaat perpetuities, which were introduced in \cite{Grabchak:Zhang:2024}.

\begin{lemma}\label{lemma: Levy measure of MD}
1. For any $\epsilon,\gamma>0$, if $X\sim \MD^\epsilon(\sigma)$, then
\begin{eqnarray}\label{eq: scaling for MD}
\frac{1}{\gamma}X \sim \MD^{\epsilon/\gamma}(\sigma).
\end{eqnarray}
2. If $\mu=\MD^\epsilon(\sigma)$, where $\epsilon>0$ and $\sigma$ is a finite Borel measure on $\mathbb S^{d-1}$, then  $\mu= \ID_0(D^\epsilon, 0)$, where
\begin{eqnarray}\label{eq: Levy meas epsilon}
D^\epsilon(B) = \int_{\mathbb S^{d-1}} \int_0^\epsilon 1_B(rs) r^{-1}\rd r \sigma(\rd s), \ \ \ B\in\mathfrak B(\mathbb R^d).
\end{eqnarray}
\end{lemma}

For $\epsilon=1$, the result in the second part is given in Theorem 5 of \cite{Grabchak:Zhang:2024}. Recall that $D^\epsilon(B)$ is the expected number of jumps that the L\'evy process associated with $\MD^\epsilon(\sigma)$ will have in the time period $[0,1]$, which fall inside of set $B$. Thus, \eqref{eq: Levy meas epsilon} implies that $\epsilon$ is the largest possible magnitude for a jump of this L\'evy process.

\section{Multivariate Dickman Approximations of Small Jumps}\label{sec: main}

In this section we derive a limit theorem showing that a Dickman L\'evy process can be used to approximate the small jumps of a large class of L\'evy processes. Here and throughout, when applied to L\'evy processes $\cond$ refers to weak convergence on the space $D([0,\infty),\mathbb R^d)$, which is the space of c\`adl\`ag functions from $[0,\infty)$ into $\mathbb R^d$ equipped with the Skorokhod topology. For any $\gamma\in\mathbb R^d$, we write $\gamma^*=\{t\gamma:t\ge0\}$ to denote the element of $D([0,\infty),\mathbb R^d)$ that maps $t$ to $t\gamma$. In particular, $0^*$ denotes the function that is identically zero.

Let $X=\{X_t:t\ge0\}$ be a L\'evy process with $X_1\sim\ID_0(\nu,0)$. Note that here, for simplicity, we set the drift to zero. Fix $\epsilon>0$ and consider the truncated L\'evy process $X^\epsilon=\{X_t^{\epsilon}:t\ge0\}$ obtained by removing the jumps of the process $X$, whose magnitudes exceed $\epsilon$. In this case, $X^\epsilon_1\sim\ID_0(\nu^\epsilon,0)$, where $\nu^{\epsilon}(B)= \int_{|x| \leq \epsilon} 1_B(x) \nu(\rd x)$, $B \in \mathfrak{B}(\mathbb{R}^d)$. Next, consider the scaled truncated process $\epsilon^{-1}X^{\epsilon}= \{\epsilon^{-1}X_t^{\epsilon}:t\ge0\}$ and note that all of its jumps are bounded by $1$. It is easily checked that $\epsilon^{-1}X_{1}^{\epsilon}\sim \ID_0(M^\epsilon,0)$, where
$$
M^\epsilon(B) = \int_{|x|\le\epsilon} 1_B\left(\frac{x}{\epsilon}\right) \nu (\rd x)=\int_{\mathbb{R}^d} 1_B\left(\frac{x}{\epsilon}\right) \nu^\epsilon (\rd x) = \nu^\epsilon (\epsilon B), \ \ \ B \in \mathfrak{B}(\mathbb{R}^d).
$$
Now, consider the multivariate Dickman L\'evy process $Y^1=\{Y_t^1:t \geq 0\}$ with $Y_1^1 \sim \MD(\sigma)$. We will give conditions for the scaled truncated process $\epsilon^{-1}X^{\epsilon}$ to converge to $Y^1$ in distribution. For the convergence to hold, we need $M^\epsilon\conv D^1$ as $\epsilon \downarrow 0$. We now give several statement that are equivalent to this. In the univariate case, a version of this result is given in Proposition 2.1 of \cite{Covo:2009}. As usual, for a set $C\in \mathfrak{B}(\mathbb{S}^{d-1})$, we write $\partial C$ to denote its boundary.

\begin{prop}\label{prop:equivalent condition}
The following statements are equivalent:	
	\begin{itemize}
		\item[\textbf{1.}] $M^\epsilon\conv D^1$ as $\epsilon \downarrow 0$.
		\item[\textbf{2.}] For all $0<h<1$ and all $C \in \mathfrak{B}(\mathbb{S}^{d-1})$ with $\sigma(\partial C)=0$, $\nu((\epsilon h, \epsilon]C) \rightarrow \sigma(C)\log \frac{1}{h}$ as $\epsilon \downarrow 0$.
	\item[\textbf{3.}] For all $p>0$ and all $C \in \mathfrak{B}(\mathbb{S}^{d-1})$ with $\sigma(\partial C)=0$, $\frac{1}{\epsilon^p} \int_{(0,\epsilon]C} |x|^p \nu(\rd x) \rightarrow \frac{\sigma(C)}{p}$ as $\epsilon \downarrow 0$.
		\item[\textbf{4.}] For some $p>0$ and all $C \in \mathfrak{B}(\mathbb{S}^{d-1})$ with $\sigma(\partial C)=0$, $\frac{1}{\epsilon^p} \int_{(0,\epsilon]C}|x|^p \nu(\rd x) \rightarrow \frac{\sigma(C)}{p}$ as $\epsilon \downarrow 0$.
	\end{itemize}
\end{prop}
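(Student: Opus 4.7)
The plan is to close the four-way equivalence through the cycle $(1)\Leftrightarrow(2)\Rightarrow(3)\Rightarrow(4)\Rightarrow(2)$. The two computational anchors used throughout are the change of variables $M^\epsilon((h,1]C) = \nu((\epsilon h,\epsilon]C)$ and the direct integration against \eqref{eq: Levy meas epsilon} giving $D^1((h,1]C) = \sigma(C)\log(1/h)$; under these, statement (2) is precisely the assertion that $M^\epsilon(B)\to D^1(B)$ on the class of annular sectors $B = (h,1]C$ with $h\in(0,1)$ and $\sigma(\partial C)=0$.

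For $(1)\Rightarrow(2)$ I would apply the portmanteau theorem for vague convergence of L\'evy measures (cf.\ \cite{Barczy:Pap:2006}) after verifying that each such sector $B$ is a $D^1$-continuity set bounded away from $0$: its boundary lies in $\{|x|=h\}\cup\{|x|=1\}\cup(h,1]\partial C$, each piece of $D^1$-measure zero (the two spheres by non-atomicity of the radial density $r^{-1}\rd r$, the conical piece because $\sigma(\partial C)=0$). For $(2)\Rightarrow(1)$ I would approximate a generic test function $f$ (bounded, continuous, vanishing on $\{|x|\le h_0\}$ for some $h_0>0$) uniformly on the compact annulus $\{h_0\le|x|\le 1\}$ by simple functions of the form $\sum c_{ij}\,1_{(h_{i-1},h_i]C_j}$, where $\{C_j\}$ is a finite partition of $\mathbb S^{d-1}$ into $\sigma$-continuity sets. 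Arbitrarily fine such partitions exist because only countably many members of a one-parameter family of disjoint spherical caps can carry positive $\sigma$-boundary mass. Additivity extends (2) from full sectors $(h,1]C$ to arbitrary shells $(h_1,h_2]C_j$, and uniform continuity of $f$ closes the approximation.

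For $(2)\Rightarrow(3)$ I would combine the layer-cake identity with the substitution $u = \epsilon h$ to obtain
\[
\epsilon^{-p}\int_{(0,\epsilon]C}|x|^p\nu(\rd x) = \int_0^1 p h^{p-1}\nu((\epsilon h,\epsilon]C)\,\rd h.
\]
Fatou's lemma combined with the pointwise convergence provided by (2) yields $\liminf_{\epsilon\to 0}\ge\sigma(C)/p$. For the matching upper bound, I would split the integration at $\delta\in(0,1)$, apply bounded convergence on $[\delta,1]$ (where $\nu((\epsilon h,\epsilon]C)\le\nu((\epsilon\delta,\epsilon]\mathbb S^{d-1})\to\theta\log(1/\delta)<\infty$), and use the self-similar rescaling
\[
\int_0^\delta p h^{p-1}\nu((\epsilon h,\epsilon]C)\,\rd h = \delta^p(\epsilon\delta)^{-p}\!\!\int_{(0,\epsilon\delta]C}|x|^p\nu(\rd x) + \delta^p\nu((\epsilon\delta,\epsilon]C)
\]
to derive the recursive inequality $\limsup A_\epsilon \le \sigma(C)(1-\delta^p)/p + \delta^p\limsup A_\epsilon$ for $A_\epsilon := \epsilon^{-p}\int_{(0,\epsilon]C}|x|^p\nu(\rd x)$; iterating in $\delta$ then pins $\limsup A_\epsilon\le\sigma(C)/p$. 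The implication $(3)\Rightarrow(4)$ is immediate.

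The main obstacle is $(4)\Rightarrow(2)$, which I would prove by a squeeze-and-refine argument. Replacing $\epsilon$ by $\epsilon\delta$ in (4) and subtracting gives
\[
\epsilon^{-p}\int_{(\epsilon\delta,\epsilon]C}|x|^p\nu(\rd x)\ \longrightarrow\ \frac{(1-\delta^p)\sigma(C)}{p},
\]
and since $|x|^p$ is trapped in $((\epsilon\delta)^p,\epsilon^p]$ on the integration region, this pinches
\[
\frac{(1-\delta^p)\sigma(C)}{p}\le\liminf_{\epsilon\to 0}\nu((\epsilon\delta,\epsilon]C)\le\limsup_{\epsilon\to 0}\nu((\epsilon\delta,\epsilon]C)\le\frac{(\delta^{-p}-1)\sigma(C)}{p}.
\]
For any $h\in(0,1)$ and $n\in\mathbb N$, I would apply this bound with $\delta = h^{1/n}$ to each of the $n$ geometric shells $(\epsilon h^{(i+1)/n},\epsilon h^{i/n}]C$ and sum the resulting estimates, producing
\[
\frac{n(1-h^{p/n})\sigma(C)}{p}\le\liminf_{\epsilon\to 0}\nu((\epsilon h,\epsilon]C)\le\limsup_{\epsilon\to 0}\nu((\epsilon h,\epsilon]C)\le\frac{n(h^{-p/n}-1)\sigma(C)}{p}.
\]
A short monotonicity and L'H\^opital check (the lower bound is increasing in $n$, the upper bound decreasing, and both tend to $p\log(1/h)$) collapses the two bounds to $\sigma(C)\log(1/h)$ as $n\to\infty$. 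The delicate point is ensuring that the shell-by-shell $o_\epsilon(1)$ errors do not accumulate, which is resolved by sending $\epsilon\to 0$ for each fixed $n$ before refining $n\to\infty$.
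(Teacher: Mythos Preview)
Your cycle is correct, and for $(1)\Leftrightarrow(2)$ you and the paper do essentially the same thing: the paper cites the polar portmanteau characterisation as Lemma~4.9 of \cite{Grabchak:2016}, while you reprove it. The other two implications, however, are handled by genuinely different mechanisms. For the passage from vague convergence to the moment condition the paper argues $(1)\Rightarrow(3)$ via a dyadic telescoping
\[
\int_{(2^{-N},1]C}|x|^p M^\epsilon(\rd x)=\sum_{k=1}^N 2^{-p(k-1)}\int_{(1/2,1]C}|x|^p M^{\epsilon_k}(\rd x),\qquad \epsilon_k=2^{-(k-1)}\epsilon,
\]
applies portmanteau uniformly in $k$ to each summand, sums the geometric series, and lets $N\to\infty$ by monotone convergence; building the integral from below, this yields finiteness of $\int_{(0,\epsilon]C}|x|^p\nu(\rd x)$ as a byproduct. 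Your layer-cake plus self-similar recursion is equally valid but needs one extra line: the rearrangement of $L\le\sigma(C)(1-\delta^p)/p+\delta^p L$ into $L\le\sigma(C)/p$ is only legitimate once $L:=\limsup A_\epsilon<\infty$ is known, which you should establish separately (e.g.\ under $(2)$ the bound $\nu((\epsilon/2,\epsilon]\mathbb S^{d-1})\le K$ for small $\epsilon$ gives $A_\epsilon\le K\sum_{k\ge1}2^{-p(k-1)}$). For $(4)\Rightarrow(2)$ the paper takes a measure-theoretic detour: it introduces the weighted measures $\eta_\epsilon(\rd x)=|x|^p M^\epsilon(\rd x)$, uses $(4)$ and the same polar lemma to obtain $\eta_\epsilon\conv\eta$, and then integrates the bounded-away-from-zero function $|x|^{-p}$ back via portmanteau. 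Your squeeze on thin shells followed by geometric refinement $n\to\infty$ is entirely elementary and avoids invoking vague convergence a second time. The trade-off is that the paper's route is shorter once the external lemma is granted, whereas yours is fully self-contained.
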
 

Note that we allow $\sigma=0$ in the above. Part of the result is the fact that, so long as any of the equivalent conditions in Proposition \ref{prop:equivalent condition} hold, for every $p>0$ we have
$$
\int_{(0,\epsilon]C} |x|^p \nu(\rd x) <\infty.
$$
We now give our main result for approximating the small jumps of a L\'evy process by a Dickman L\'evy process.

\begin{thrm}\label{thrm: convergence}
We have $\epsilon^{-1}{X^{\epsilon}} \cond  Y^1$ as $\epsilon\downarrow0$ if and only if any of the equivalent conditions in Proposition \ref{prop:equivalent condition} hold. Furthermore, when $\sigma=0$, this is equivalent to  $\epsilon^{-1}X^{\epsilon} \cond 0^*$ as $\epsilon\downarrow0$, and when
$\sigma\ne0$, it is equivalent to 
$$
\left(\frac{\sigma(\mathbb S^{d-1})}{p\int_{|x|\le\epsilon}|x|^p \nu(\rd x)} \right)^{1/p} X^{\epsilon} \cond Y^1\mbox{ as } \epsilon\downarrow0
$$ 
for any $p>0$. 
\end{thrm}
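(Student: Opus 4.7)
The plan is to reduce the functional convergence to vague convergence of L\'evy measures and then invoke Proposition~\ref{prop:equivalent condition}. Since both $\epsilon^{-1}X_1^\epsilon \sim \ID_0(M^\epsilon,0)$ and $Y_1^1 \sim \ID_0(D^1,0)$ are finite-variation, zero-drift infinitely divisible random variables with L\'evy measures supported in $\{|x|\le 1\}$, weak convergence of the one-dimensional marginals at $t=1$ is equivalent to vague convergence $M^\epsilon \conv D^1$, with no drift-correction term appearing since both truncation radii equal $1$. The standard upgrade from one-dimensional convergence to functional convergence in $D([0,\infty),\mathbb R^d)$ for L\'evy processes (see, e.g., Corollary VII.3.6 of Jacod--Shiryaev) then together with Proposition~\ref{prop:equivalent condition} establishes the equivalence of $\epsilon^{-1}X^\epsilon \cond Y^1$ with each of conditions 1--4.

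When $\sigma(\mathbb S^{d-1})=0$, we have $D^1=0$, so $Y_1^1 \sim \delta_0$ and $Y^1 \equiv 0^*$, giving the stated equivalence. When $\theta:=\sigma(\mathbb S^{d-1})>0$, I would set $I_\epsilon^p := \int_{|x|\le\epsilon}|x|^p \nu(\rd x)$, $a_\epsilon := (\theta/(pI_\epsilon^p))^{1/p}$, and $\rho_\epsilon := a_\epsilon\epsilon$, and exploit the factorization
\[
a_\epsilon X^\epsilon = \rho_\epsilon\cdot\epsilon^{-1}X^\epsilon.
\]
For the forward direction, condition 3 of Proposition~\ref{prop:equivalent condition} applied with $C=\mathbb S^{d-1}$ gives $\epsilon^{-p}I_\epsilon^p \to \theta/p$, hence $\rho_\epsilon \to 1$; a functional Slutsky argument then yields $a_\epsilon X^\epsilon \cond Y^1$.

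The main obstacle is the converse: from $a_\epsilon X^\epsilon \cond Y^1$, deduce $\rho_\epsilon \to 1$, from which (a) follows by Slutsky applied to $\epsilon^{-1}X^\epsilon = \rho_\epsilon^{-1}(a_\epsilon X^\epsilon)$. Letting $\tilde M^\epsilon$ denote the L\'evy measure of $a_\epsilon X^\epsilon_1$, the hypothesis gives $\tilde M^\epsilon \conv D^1$, and by construction $\int|y|^p\tilde M^\epsilon(\rd y)=\theta/p$ for every $\epsilon$, matching the $p$-th moment of $D^1$. Since $M^\epsilon$ is supported in $\{|y|\le 1\}$, the pushed-forward measure $\tilde M^\epsilon$ is supported in $\{|y|\le\rho_\epsilon\}$, and the non-triviality of $D^1$ near $\{|y|=1\}$ forces $\liminf\rho_\epsilon\ge 1$ via a bounded continuous radial test function supported near the unit sphere. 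For the matching upper bound $\limsup\rho_\epsilon\le 1$, I would argue by contradiction: a subsequential limit $\rho_*>1$ would force, via continuity of the scaling operation under vague convergence, the subsequential vague limit of $M^\epsilon$ to be $D^{1/\rho_*}$; comparing the conditions on the $\nu$-mass of radial annuli produced by Proposition~\ref{prop:equivalent condition} applied to this limit against those produced by the full-sequence convergence $\tilde M^\epsilon \conv D^1$, and using the moment identity, one derives mutually inconsistent statements, yielding the contradiction. Carrying out this reconciliation of the two scalings cleanly, including ruling out $\rho_*=\infty$, is the main technical hurdle of the proof.
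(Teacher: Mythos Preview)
Your reduction in the first paragraph contains a genuine gap. Weak convergence of $\ID_0(M^\epsilon,0)$ to $\ID_0(D^1,0)$ is \emph{not} equivalent to vague convergence $M^\epsilon\conv D^1$ alone, even when all L\'evy measures are supported in $\{|x|\le 1\}$ and the drifts $\gamma$ vanish. The general criterion (Theorem 15.14 of Kallenberg, or Theorem 8.7 of Sato) requires, in addition to $M^\epsilon\conv D^1$: (i) no Gaussian part emerges, i.e.\ $\lim_{\delta\downarrow 0}\limsup_{\epsilon\downarrow 0}\int_{|x|\le\delta}\langle z,x\rangle^2 M^\epsilon(\rd x)=0$; and (ii) the shifts in the $\ID(A,M,b)$-parametrization converge, which here reads $\int_{|x|\le 1}x\,M^\epsilon(\rd x)\to\int_{|x|\le 1}x\,D^1(\rd x)$. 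Neither follows from vague convergence and support in the unit ball: take $d=1$ and $M^\epsilon=\epsilon^{-1}\delta_{\epsilon}$, which satisfies your hypotheses and converges vaguely to $0$, yet $\ID_0(M^\epsilon,0)\cond\delta_1\ne\delta_0=\ID_0(0,0)$. The paper closes this gap by using the implication $1\Rightarrow 3$ inside Proposition~\ref{prop:equivalent condition}: with $p=1$ and $C=\mathbb S^{d-1}$ one gets $\epsilon^{-1}\int_{|x|\le\epsilon}|x|\,\nu(\rd x)\to\sigma(\mathbb S^{d-1})$, and this bound on $\int_{|x|\le 1}|x|\,M^\epsilon(\rd x)$ is precisely what is needed to verify (i) and (ii). Your remark that ``both truncation radii equal $1$'' conflates the support of $M^\epsilon$ with the centering integral and does not substitute for this step.

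For the second part, your forward direction via Slutsky is exactly the paper's argument. Your extended treatment of the converse (from $a_\epsilon X^\epsilon\cond Y^1$ back to the conditions of Proposition~\ref{prop:equivalent condition}) goes well beyond what the paper supplies: the paper writes only that ``the second part follows immediately by combining the first part with Slutsky's Theorem,'' which in effect establishes just the forward implication, since once condition~3 of Proposition~\ref{prop:equivalent condition} holds one has $\rho_\epsilon=a_\epsilon\epsilon\to 1$ and Slutsky applies. If one reads ``equivalent'' in the theorem statement literally, then you are right that the converse requires showing $\rho_\epsilon\to 1$ from the hypothesis alone, and your sketch correctly isolates this as the crux; but the paper does not carry this out, so your effort here is addressing a point the paper leaves open rather than reproducing its proof.
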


This extends the univariate result in \cite{Covo:2009} to the multivariate case. We note that, in that paper, the result was formulated in terms of a condition that is analogous to Condition 4 with $p=1$ in our Proposition \ref{prop:equivalent condition}.

\begin{remark}
Clearly, the assumption of Theorem \ref{thrm: convergence} holds when $X\eqd Y^1$ and hence $\epsilon^{-1}{X^{\epsilon}} \cond  X^1$. Moreover, in this case, the process $X^{\epsilon}$ is an $\epsilon$-multivariate Dickman L\'evy process and applying \eqref{eq: scaling for MD} at the level of L\'evy processes gives the stronger fact that $\epsilon^{-1}X^\epsilon\eqd X^1$ for each $\epsilon>0$. In this sense, multivariate Dickman L\'evy processes are stable under a rescaling of their jumps. The fact that these processes arises in limit theorems for small jumps is analogous to how distributions that are stable under convolution are the ones that serve as the limits of sums of iid random variables (and, equivalently, the long and short time limits of L\'evy processes), see \cite{Meerschaert:Scheffler:2001}, \cite{Kallenberg:2002}, \cite{Grabchak:2015}, or \cite{Grabchak:2017}. For more on the idea of Dickman distributions being stable, see \cite{Grabchak:2025}.
\end{remark}

We now give conditions that are easily checked in an important special case. The formulation is influenced by the discussion in \cite{Rosinski:Sinclair:2010}.  For a finite Borel measure $\sigma$ on $\mathbb S^{d-1}$, let $L^1(\mathbb S^{d-1},\sigma)$ be the space of Borel functions $g:\mathbb S^{d-1}\mapsto\mathbb R$ with $\int_{\mathbb S^{d-1}} |g(s)|\sigma(\rd s)<\infty$. For any nonnegative $g\in L^1(\mathbb S^{d-1},\sigma)$ define the finite measure $\sigma_g$ by $\sigma_g(B) = \int_{B} g(s)\sigma(\rd s)$ for $B\in\mathfrak B(\mathbb S^{d-1})$. 

\begin{cor}\label{cor:density}
Assume that $\nu$ is of the form
$$
\nu(B) = \int_{\mathbb{S}^{d-1}} \int_0^{\infty} 1_{B}(rs) \rho(r,s) \rd r \sigma(\rd s), \ \ \ B \in \mathfrak{B}(\mathbb{S}^{d-1}),
$$
where $\sigma$ is a finite Borel measure on $\mathbb{S}^{d-1}$ and $\rho:[0,\infty)\times\mathbb S^{d-1}\mapsto[0,\infty)$ is a Borel function. If there is a nonnegative $g\in L^1(\mathbb S^{d-1},\sigma)$ with $r\rho(r,\cdot) \to g(\cdot)$ in $L^1(\mathbb S^{d-1},\sigma)$ as $r \downarrow 0$, i.e.\ 
$\int_{\mathbb{S}^{d-1}} |r\rho(r, s) - g(s) | \sigma(\rd s) \to 0$ as $r \downarrow 0$, then the equivalent conditions in Proposition \ref{prop:equivalent condition} hold with $\sigma_g$ in place of $\sigma$ and $\frac{X^{\epsilon}}{\epsilon} \cond Y^1$ as $\epsilon \downarrow 0$, where $Y^1_1\sim\MD(\sigma_g)$.
\end{cor}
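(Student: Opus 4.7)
The plan is to verify condition \textbf{4} of Proposition \ref{prop:equivalent condition} (taking $p=1$) with the measure $\sigma_g$ in place of $\sigma$, and then invoke Theorem \ref{thrm: convergence} for the convergence $\epsilon^{-1}X^\epsilon\cond Y^1$. Note first that $\sigma_g$ is a finite Borel measure on $\mathbb{S}^{d-1}$ since $g\in L^1(\mathbb{S}^{d-1},\sigma)$, so it is a legitimate spectral measure.

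First, fix a Borel set $C\subseteq\mathbb{S}^{d-1}$. Using the polar form of $\nu$ and Fubini's theorem,
\begin{equation*}
\frac{1}{\epsilon}\int_{(0,\epsilon]C}|x|\,\nu(\rd x) = \frac{1}{\epsilon}\int_C\int_0^\epsilon r\rho(r,s)\,\rd r\,\sigma(\rd s) = \frac{1}{\epsilon}\int_0^\epsilon \phi_C(r)\,\rd r,
\end{equation*}
where $\phi_C(r):=\int_C r\rho(r,s)\,\sigma(\rd s)$. The $L^1$-convergence hypothesis gives the pointwise-in-$r$ estimate
\begin{equation*}
|\phi_C(r)-\sigma_g(C)| \le \int_{\mathbb{S}^{d-1}}|r\rho(r,s)-g(s)|\,\sigma(\rd s)\;\longrightarrow\;0 \quad\text{as }r\downarrow 0,
\end{equation*}
so $\phi_C$ is bounded on a neighborhood of $0$ and tends to $\sigma_g(C)$ as $r\downarrow 0$. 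By the elementary fact that if $\phi(r)\to L$ as $r\downarrow 0$ and $\phi$ is locally bounded near $0$, then $\epsilon^{-1}\int_0^\epsilon\phi(r)\,\rd r\to L$, we conclude
\begin{equation*}
\frac{1}{\epsilon}\int_{(0,\epsilon]C}|x|\,\nu(\rd x)\;\longrightarrow\;\sigma_g(C)\quad\text{as }\epsilon\downarrow 0.
\end{equation*}

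Since this convergence holds for \emph{every} Borel $C\subseteq\mathbb{S}^{d-1}$, it holds a fortiori for those $C$ with $\sigma_g(\partial C)=0$, which is exactly condition \textbf{4} of Proposition \ref{prop:equivalent condition} with $p=1$ and $\sigma_g$ in place of $\sigma$. The equivalence in Proposition \ref{prop:equivalent condition} then yields all four conditions, and Theorem \ref{thrm: convergence} gives $\epsilon^{-1}X^\epsilon\cond Y^1$ where $Y^1_1\sim\MD(\sigma_g)$.

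There is no real obstacle here: the only point requiring care is the transition from the $L^1$-convergence assumption (which controls integrals against $\sigma$ on all of $\mathbb{S}^{d-1}$) to convergence of integrals over arbitrary sub-Borel-sets $C$. This is immediate from the uniform bound $|\phi_C(r)-\sigma_g(C)|\le\int_{\mathbb{S}^{d-1}}|r\rho(r,s)-g(s)|\sigma(\rd s)$, which does not depend on $C$.
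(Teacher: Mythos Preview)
Your proof is correct and follows essentially the same approach as the paper's: both verify condition \textbf{4} (equivalently \textbf{3}) of Proposition \ref{prop:equivalent condition} with $p=1$ by writing $\frac{1}{\epsilon}\int_{(0,\epsilon]C}|x|\,\nu(\rd x)$ in polar form, bounding $\left|\int_C r\rho(r,s)\,\sigma(\rd s)-\sigma_g(C)\right|$ by the $L^1$ distance $\int_{\mathbb{S}^{d-1}}|r\rho(r,s)-g(s)|\,\sigma(\rd s)$, and then averaging. The only cosmetic difference is that the paper splits the integral explicitly into $\frac{1}{\epsilon}\int_0^\epsilon\int_C(r\rho(r,s)-g(s))\,\sigma(\rd s)\,\rd r+\sigma_g(C)$, whereas you package the same computation via the function $\phi_C$ and the Ces\`aro-type averaging fact.
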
	

We now give two ways of checking that convergence in $L^1(\mathbb S^{d-1},\sigma)$ holds. 

\begin{remark} \label{remark:uni2L1}
Assume that, for each $s\in\mathbb S^{d-1}$, $r\rho(r,s) \to g(s)$ as $r \downarrow 0$. \\
1. If $g$ is bounded and $r\rho(r,s)$ is bounded for small enough $r$, then we have convergence in $L^1(\mathbb S^{d-1},\sigma)$. This follows by dominated convergence and the fact that $\sigma$ is a finite measure.\\
2. If $r\rho(r,s) \to g(s)$ uniformly in $s$, then we have convergence in $L^1(\mathbb S^{d-1},\sigma)$. This is trivial when $\sigma=0$. To see that it holds when $\sigma\ne0$ note that, in this case, for every $\epsilon >0$ there exists a $\delta >0$ such that for every $s \in \mathbb{S}^{d-1}$ and every $r\in(0, \delta)$ we have $|r\rho(r,s)-g(s)|<\frac{\epsilon}{\sigma(\mathbb{S}^{d-1})}$. Thus, for such $r$, $\int_{\mathbb{S}^{d-1}}|r\rho(r,s)-g(s)|\sigma(\rd s)  \leq\int_{\mathbb{S}^{d-1}}\frac{\epsilon}{\sigma(\mathbb{S}^{d-1})}\sigma(\rd s)=\epsilon$.
\end{remark}

Theorem \ref{thrm: convergence} tells us that, under appropriate conditions, we can approximate the small jumps of a L\'evy process using a Dickman L\'evy process. We can then model its large jumps using a compound Poisson process. More formally, let $X=\{X_t:t\ge0\}$ be a L\'evy process with $X_1\sim\ID_0(\nu,\gamma)$. Note that we now allow for a nonzero drift. Set  $\epsilon>0$ and let $\nu^{\epsilon}(B) = \int_{|x| \leq \epsilon} 1_{B}(x) \nu(\rd x)$ and $\tilde{\nu}^{\epsilon}(B) = \int_{|x| > \epsilon} 1_{B}(x) \nu(\rd x)$ for every $B \in \mathfrak{B}(\mathbb{R}^d)$. It follows that
 \begin{eqnarray}\label{eq:decomp}
	X \eqd X^\epsilon+\tilde X^{\epsilon}+\gamma^*,
\end{eqnarray}
where $X^\epsilon=\{X^\epsilon_t:t\ge0\}$ is a L\'evy process with $X^\epsilon_1\sim\ID_0(\nu^\epsilon,0)$, $\tilde X^{\epsilon}=\{\tilde X^\epsilon_t:t\ge0\}$ is a L\'evy process with $\tilde X_1\sim\ID_0(\tilde \nu^\epsilon,0)$, and $X^\epsilon$ and $\tilde X^\epsilon$ are independent. Here, we separated $X$ into: $X^\epsilon$, the process of small jumps, $\tilde X^\epsilon$, the process of large jumps, and $\gamma^*$, the deterministic drift process. Under the assumptions of Theorem \ref{thrm: convergence}, when $\epsilon>0$ is small, we can approximate $X^\epsilon$ by $\epsilon$ times a Dickman L\'evy process $Y^1$, which gives
 \begin{eqnarray}\label{eq: approx decomp}
	X \approxd \epsilon Y^1+\tilde X^{\epsilon}+\gamma^*,
\end{eqnarray}
where $Y^1$ is independent of $\tilde X^\epsilon$. Following ideas in \cite{Cohen:Rosinski:2007} about approximations of small jumps by Brownian motion, we now verify that the error in this approximation approaches $0$ as $\epsilon\downarrow0$.

\begin{cor}\label{cor: decomp}
Let $\{X_t:t \geq 0\}$ be a L\'evy process on $\mathbb{R}^d$ with $X_1\sim\ID_0(\nu,\gamma)$ and let $Y^1$, $\tilde X^\epsilon$, and $\gamma^*$ be as above. If any of the equivalent conditions in Proposition \ref{prop:equivalent condition} hold, then
for every $\epsilon>0$, there exists a c\`adl\`ag process $Z^{\epsilon}=\{Z_t^{\epsilon}: t \geq 0\}$ such that
	\begin{equation*}
		X\eqd \epsilon Y^1 + \tilde X^{\epsilon}+\gamma^*+Z^{\epsilon},
	\end{equation*}
and, for each $T>0$,
\begin{equation}\label{eq: Zt epsilon to 0}
	\sup \limits_{t \in [0,T]} |\epsilon^{-1}Z_t^{\epsilon}| \conp 0 \ \text{as} \ \epsilon \downarrow 0.
\end{equation}
\end{cor}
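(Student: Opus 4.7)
The plan is to construct coupled versions of $X^\epsilon$ and $Y^1$ on a common probability space and set $Z^\epsilon := X^\epsilon - \epsilon Y^1$, with $Y^1$ taken independent of $\tilde X^\epsilon$. Using the decomposition \eqref{eq:decomp}, the equality $X \eqd \epsilon Y^1 + \tilde X^\epsilon + \gamma^* + Z^\epsilon$ then holds by construction and $Z^\epsilon$ is automatically c\`adl\`ag. The entire content of \eqref{eq: Zt epsilon to 0} is to arrange the joint law of $(X^\epsilon, Y^1)$ so that $\sup_{t \in [0,T]} |\epsilon^{-1} X_t^\epsilon - Y^1_t| \conp 0$ as $\epsilon \downarrow 0$.

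The starting point is Theorem \ref{thrm: convergence}: under the equivalent conditions of Proposition \ref{prop:equivalent condition}, $\epsilon^{-1} X^\epsilon \cond Y^1$ in $D([0,\infty), \mathbb R^d)$. In the Gaussian analogue of \cite{Cohen:Rosinski:2007} the limit is continuous, so Skorokhod representation immediately yields sup-norm convergence; in our setting $Y^1$ has jumps, which is the principal obstacle, since almost-sure convergence in the Skorokhod metric does not upgrade to sup-norm convergence when the limit is genuinely discontinuous. Accordingly, I would construct the coupling directly via a joint Poisson random measure representation rather than appealing solely to Skorokhod representation.

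The coupling proceeds by truncation at jump magnitude. Fix $\delta \in (0,1)$ with $D^1(\{|x|=\delta\})=0$ and decompose $\epsilon^{-1} X^\epsilon = V^{\epsilon,\delta}+U^{\epsilon,\delta}$ and $Y^1 = V^\delta + U^\delta$ into small-jump ($|x|\le\delta$) and large-jump ($|x|>\delta$) parts. The large-jump parts are compound Poisson processes whose finite L\'evy measures converge weakly as finite measures on $\{\delta<|x|\le 1\}$ by the vague convergence $M^\epsilon\conv D^1$; they can be realized jointly on a common probability space by thinning a single Poisson random measure so that jump times agree exactly and jump magnitudes agree up to an asymptotically vanishing error, giving $\sup_{t\in[0,T]}|U_t^{\epsilon,\delta}-U_t^\delta| \conp 0$ for each fixed $\delta$. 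For the small-jump parts, Proposition \ref{prop:equivalent condition}(3) with $p=2$ together with a change of variables yields $\int_{|x|\le\delta}|x|^2 M^\epsilon(\rd x) \to \delta^2 \sigma(\mathbb S^{d-1})/2$ as $\epsilon\downarrow0$, and Doob's $L^2$-maximal inequality applied to the centered small-jump martingales gives $\rE \sup_{t\in[0,T]}|V^{\epsilon,\delta}_t|^2 \le C_T \delta^2$ uniformly in small $\epsilon$, with the same bound for $V^\delta$. Given $\eta>0$, I first choose $\delta>0$ small enough that the $L^2$-bound forces $P(\sup_t|V^{\epsilon,\delta}_t|>\eta/4) + P(\sup_t|V^\delta_t|>\eta/4) < \eta/2$ uniformly in small $\epsilon$, then choose $\epsilon$ small enough that $P(\sup_t|U^{\epsilon,\delta}_t-U^\delta_t|>\eta/2)<\eta/2$; this delivers the required convergence.

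The hardest step is the coupling of the compound Poisson large-jump parts: weak convergence of finite L\'evy measures on $\{\delta<|x|\le 1\}$ does not by itself produce a jump-by-jump matching. The standard remedy is to realize both processes from a common Poisson random measure on $[0,\infty) \times \{\delta<|x|\le 1\} \times [0,1]$ using acceptance/rejection against densities with respect to a dominating reference measure, and most of the technical work lies in controlling the missed and extra jumps so that the coupling produces vanishing sup-norm difference on $[0,T]$.
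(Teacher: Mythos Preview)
Your overall plan is sound, but you are working much harder than necessary because you have missed the key shortcut the paper exploits. Theorem~15.17 in Kallenberg's \emph{Foundations of Modern Probability} (2nd ed.) says that if L\'evy processes $X^n$ converge in distribution to a L\'evy process $Y$ in $D([0,\infty),\mathbb R^d)$, then one can choose versions $\hat X^n\eqd X^n$ on a common probability space with $Y$ such that $\sup_{t\le T}|\hat X^n_t-Y_t|\conp0$ for every $T>0$. The paper's proof is therefore only a few lines: Theorem~\ref{thrm: convergence} gives $\epsilon^{-1}X^\epsilon\cond Y^1$; Kallenberg's theorem produces $\hat X^\epsilon\eqd\epsilon^{-1}X^\epsilon$ with $\sup_{t\le T}|\hat X^\epsilon_t-Y^1_t|\conp0$; take $\hat X^\epsilon$ independent of $\tilde X^\epsilon$ on an enlarged space and set $Z^\epsilon_t=\epsilon(\hat X^\epsilon_t-Y^1_t)$. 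Your observation that Skorokhod representation alone does not upgrade $J_1$-convergence to sup-norm convergence when the limit has jumps is correct for generic c\`adl\`ag processes, but the L\'evy structure gives precisely this upgrade for free; your hand-built PRM coupling is essentially rediscovering the proof of Kallenberg's result.

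If you do want to carry out your direct construction, be aware of one gap you did not flag: in your argument the truncation level $\delta$ is chosen as a function of the target accuracy $\eta$, and the large-jump coupling you build depends on $\delta$. But the statement requires a \emph{single} process $Z^\epsilon$ (hence a single coupled realization of $\epsilon^{-1}X^\epsilon$) for each $\epsilon$, against which \emph{all} $\eta>0$ must be tested. You therefore cannot recouple for each $\delta$; you must build one coupling valid for all truncation levels simultaneously, for instance by realizing both $\epsilon^{-1}X^\epsilon$ and $Y^1$ from a single Poisson random measure via a LePage--Rosi\'nski series and then taking the $\delta$-decomposition inside that fixed realization. With that fix your route does go through, but at considerably greater cost than the paper's one-line appeal to Kallenberg.
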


This suggest that, for small $\epsilon$, we can approximate $Z^{\epsilon}$ by $0$ and we can approximately simulate from $X$ by independently simulating $Y^1$ and $\tilde X^{\epsilon}$ and then applying \eqref{eq: approx decomp}. Three methods for simulating $Y^1$ are discussed in \cite{Grabchak:Zhang:2024}. Two of the methods are approximate, while the third is exact (under mild assumptions) when simulating on a finite mesh. In this paper, we use the shot-noise method, which, while only approximate, allows us to simulate at every point on an interval. Specifically, we can simulate up to some finite time horizon $T>0$ as follows. Let $E_1,E_2,\dots\iid\Exp(\theta)$, let $U_1,U_2,\dots\iid U(0,1)$, and let $\xi_1,\xi_2,\dots\iid \sigma/\theta$, where $\theta=\sigma(\mathbb S^{d-1})$, be independent sequences of random variables. If $\Gamma_i=\sum_{k=1}^i E_i$ for $i=1,2,\dots$, then
\begin{eqnarray}\label{eq: sim MD process}
\left\{Y^1_t:0\le t\le T\right\} \eqd \left\{ \sum_{i=1}^{\infty} e^{-\Gamma_i/T} \xi_i 1_{[0,t/T]}(U_i): 0\le t\le T\right\}.
\end{eqnarray}
In practice, of course, the infinite sum must be truncated at some finite (possibly random) value, which is why this is an approximate method. 

We now turn to the simulation of $\tilde X^{\epsilon}$. Toward this end, note that $\tilde \nu^\epsilon$ is a finite measure, set $\lambda^\epsilon =  \tilde\nu^\epsilon(\mathbb R^d)$, let $\tilde \nu^\epsilon_p= \tilde \nu^\epsilon/\lambda^\epsilon$, and note that $\tilde \nu^\epsilon_p$ is a probability measure. Let $W_1,W_2,\dots\iid \tilde \nu^\epsilon_p$ and, independent of this, let $\{N(t):t\ge0\}$ be a Poisson process with rate $\lambda^\epsilon$. It is readily checked that $\tilde X^\epsilon$ is a compound Poisson process and that
 \begin{eqnarray}\label{eq: comp pois rep}
\tilde X^\epsilon =\left\{ \tilde X_t^\epsilon :t\ge0\right\} 
\eqd\left\{ \sum_{i=1}^{N(t)} W_i :t\ge0\right\}.
\end{eqnarray}
When simulating until a finite time horizon $T>0$, it is often more convenient to use the following representation. Let  $W_1,W_2,\dots\iid \tilde \nu^\epsilon_p$ and $U_1,U_2,\dots\iid U(0,1)$ be independent sequences, and, independent of these, let $N\sim\mathrm{Pois}(T\lambda_\epsilon)$. We have
 \begin{eqnarray}\label{eq: comp pois rep 2}
\left\{ \tilde X_t^\epsilon :0\le t\le T\right\} \eqd \left\{ \sum_{i=1}^{N} W_i1_{[0,t/T]}(U_i) : 0\le t\le T\right\},
\end{eqnarray}
see, e.g.,\ Section 6.1 in \cite{Cont:Tankov:2004} for details. Thus, the problem essentially reduces to finding efficient methods to simulate from $\tilde \nu^\epsilon_p$.

\section{Simulation from General Multivariate Gamma Distributions}\label{sec: gen gamma}

There are multiple ways to extend gamma distributions to the multivariate case. However, in some approaches the resulting distributions are not infinitely divisible, see the references in \cite{Perez-Abreu:Stelzer:2014}. Since infinite divisibility is important for many financial applications \cite{Cont:Tankov:2004}, we focus on an extension that maintains this property. Specifically, \cite{Perez-Abreu:Stelzer:2014} defines multivariate gamma distributions (MGD) as follows. A distribution $\mu$ on $\mathbb R^d$ is said to be MGD if $\mu=\ID_0(\nu,0)$ with
\begin{equation}\label{eq: MGD}
\nu(B) = \int_{\mathbb S^{d-1}}\int_0^\infty 1_B(r s)r^{-1}e^{-b(s)r}\rd r\sigma(\rd s), \ \ \  B \in \mathfrak{B}(\mathbb{R}),
\end{equation} 
where $\sigma$ is a finite measure on $\mathbb S^{d-1}$ and $b : \mathbb S^{d-1}\mapsto (0,\infty)$ is a Borel function. One limitation of this class is that it is not closed under taking convolutions. We consider a generalization that is closed under taking convolutions and has additional flexibility.

\begin{defn}\label{defn: GMGD}
A distribution $\mu$ on $\mathbb R^d$ is said to be a general multivariate gamma distribution (GMGD) if $\mu=\ID_0(\nu,\gamma)$, where
\begin{equation}\label{eqn:PT0S}
	\nu(B) = \int_{\mathbb{S}^{d-1}} \int_0^{\infty} 1_{B}(rs)q(r^p,s) r^{-1} \rd r \sigma(\rd s), \ \ \  B \in \mathfrak{B}(\mathbb{R}^d),
\end{equation} 
$p>0$, $\sigma$ is a finite Borel measure on $\mathbb S^{d-1}$, and $q:(0,\infty)\times\mathbb S^{d-1}\mapsto[0,\infty)$ is a Borel function such that for each $s\in\mathbb S^{d-1}$ we have $\lim_{r\downarrow0}q(r,s)=1$ and $q(\cdot,s)$ is completely monotone. 
\end{defn}

In \cite{Grabchak:2012} such distributions are called proper $p$-tempered $0$-stable distributions, see also \cite{Grabchak:2016}  for many properties. The complete monotonicity of $q$ along with our other assumptions implies that $q(r^p,s) = \int_{(0,\infty)} e^{-r^p v} Q_{s}(\rd v)$ for some measurable family $\{Q_{s}\}_{s \in \mathbb{S}^{d-1}}$ of probability measures on $(0,\infty)$ satisfying
$$
\int_{\mathbb S^{d-1}}\int_0^1 \log(1/v)Q_s(\rd v) \sigma(\rd s)<\infty,
$$
see Remark 1 and Corollary 1 in \cite{Grabchak:2012}. If we take $p=1$ and $Q_s = \delta_{b(s)}$, the L\'evy measure in \eqref{eqn:PT0S} reduces to the one in \eqref{eq: MGD}. When  $\gamma\in[0,\infty)^d$ and
$$ 
\sigma\left(\mathbb S^{d-1}\cap \left([0,\infty)^d\right)^c\right)=0
$$
the distribution $\mu$ is concentrated on $[0,\infty)^d$ and its associated L\'evy process is a multivariate subordinator, which can then be used for multivariate subordination of Brownian motion. This idea was introduced in \cite{Barndorff-Nielsen:Pedersen:Sato:2001}, see also \cite{Buchmann:etal:2017} and the references therein for a discussion of application to finance.

\begin{remark}
We note that GMGD forms an important subclass of several well-known classes of distributions. Specifically, if we allow for a Gaussian part and remove the requirement that $\lim_{r\downarrow 0}q(r,s)=1$ holds for each $s\in\mathbb S^{d-1}$, then the resulting distributions correspond to those denoted $J_{0,p}$ in \cite{Maejima:Nakahara:2009} and called extended $p$-tempered $0$-stable distributions in \cite{Grabchak:2016}. When $p=2$ they are called class $M$, see \cite{Maejima:2015} and the references therein. When $p=1$ we get the Thorin class, which is the smallest class of distributions on $\mathbb R^d$ that is closed under convolution and weak convergence and contains the distributions of all so-called elementary gamma random variables on $\mathbb R^d$, see \cite{Barndorff-Nielsen:Maejima:Sato:2006} or \cite{Laverny:etal:2021}. A related characterization for every $p>0$ is given in Theorem 4.18 of \cite{Grabchak:2016}.
\end{remark}

We now show that the Dickman approximation holds for the small jumps of L\'evy processes associated with GMGD. Toward this end, let $\rho(r,s) = q(r^p,s)r^{-1}$ and $g(s)=1$. Definition \ref{defn: GMGD} implies that $r\rho(r,s) = q(r^p,s)\to1=g(s)$ as $r\downarrow0$. From here, the fact that $r\rho(r,s)=q(r^p,s)\le 1$, implies that the assumptions of Remark \ref{remark:uni2L1} are satisfied and thus that the result of Theorem \ref{thrm: convergence} holds. This means that we can approximately simulate a GMGD L\'evy process by using the approximation in \eqref{eq: approx decomp}. To simulate the process of large jumps, we can either use \eqref{eq: comp pois rep} or \eqref{eq: comp pois rep 2}. Either way, we need a way to simulate from $\tilde\nu^\epsilon_p$. We now develop an approach to do this.

Fix $\epsilon>0$, let $\tilde\nu^\epsilon$ be the finite Borel measure defined by
$$
\tilde\nu^\epsilon(B) = \int_{\mathbb{S}^{d-1}} \int_{\epsilon}^\infty 1_{B}(rs)q(r^p,s) r^{-1} \rd r \sigma(\rd s), \ \ \  B \in \mathfrak{B}(\mathbb{R}^d),
$$
let $\lambda^\epsilon =\tilde\nu^\epsilon(\mathbb R^d)$, and let $\tilde\nu^\epsilon_p =\tilde\nu^\epsilon/\lambda^\epsilon$ be a probability measure. For $u>0$ define
\begin{eqnarray*}
		\ell(u) = \int_u^\infty r^{-1} e^{-r^p } \rd r 
		= \frac{1}{p} \int_{u^p }^\infty y^{-1} e^{-y} \rd y 
		= \frac{1}{p} \Gamma(0, u^p),
    \end{eqnarray*}
where $\Gamma(\cdot , \cdot)$ is the upper incomplete gamma function. Next, for $s\in\mathbb S^{d-1}$, define
	\begin{eqnarray*}
		k_\epsilon(s) &=& \int_\epsilon^\infty  q(r^p, s) r^{-1} \rd r  \\
		&=& \int_\epsilon^\infty \int_0^\infty r^{-1} e^{-r^p v} Q_s(\rd v) \rd r \\
		&=& \frac{1}{p} \int_0^\infty \int_{\epsilon v^{1/p}}^\infty r^{-1} e^{-r^p} \rd r Q_s(\rd v) 
		=  \int_0^\infty \ell(\epsilon v^{1/p}) Q_s(\rd v).
	\end{eqnarray*}
Next, define a probability measure on $\mathbb S^{d-1}$ by
	\begin{eqnarray*}
		\sigma_p(\rd s) = \frac{k_\epsilon(s)}{\lambda^\epsilon} \sigma(\rd s)
	\end{eqnarray*}
and a family of probability measures on $(0,\infty)$ by
	\begin{equation*}
		G_V(\rd v; s) = \frac{\ell(\epsilon v^{1/p})}{k_\epsilon(s)} Q_s (\rd v),
	\end{equation*}
	where $s\in\mathbb S^{d-1}$ is a parameter. Finally, define a family of probability measures on $(0,\infty)$ by $G_R(\rd r; a) = g_R(r;a) \rd r$, where $a>0$ is a parameter and
    \begin{equation}\label{distofr}
    	g_R(r; a) = \frac{1}{\ell(a)} r^{-1} e^{-r^p} 1_{[r \ge a]}
    \end{equation}
is the probability density function (pdf).

\begin{prop}\label{prop:zdecomp}
Let $S \sim \sigma_p$. Given $S$, let $V\sim G_V(\cdot; S)$ and, given $S$ and $V$, let $R\sim G_R(\cdot; V^{1/p} \epsilon)$. If $W=RV^{-1/p} S$, then $W \sim \tilde{\nu}^\epsilon_p$.
\end{prop}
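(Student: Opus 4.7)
The plan is to compute $\rP(W \in B)$ directly by unpacking the hierarchical definition $S \to V \to R \to W$ and showing it matches $\tilde\nu^\epsilon_p(B) = \tilde\nu^\epsilon(B)/\lambda^\epsilon$. Since $W = RV^{-1/p} S$, the map $(r,v,s) \mapsto rv^{-1/p}s$ sends the triple into $\mathbb R^d$, so conditioning on $S=s$ and $V=v$ reduces the problem to a one-dimensional radial change of variables with a fixed direction.

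First I will write
\begin{equation*}
\rP(W \in B) = \int_{\mathbb{S}^{d-1}} \sigma_p(\rd s) \int_0^\infty G_V(\rd v; s) \int_{v^{1/p}\epsilon}^\infty 1_B(rv^{-1/p}s)\, g_R(r; v^{1/p}\epsilon)\, \rd r.
\end{equation*}
Substituting the definitions of $\sigma_p$, $G_V$, and $g_R$ will make the factors $k_\epsilon(s)$ and $\ell(\epsilon v^{1/p})$ cancel telescopically, leaving
\begin{equation*}
\rP(W \in B) = \frac{1}{\lambda^\epsilon}\int_{\mathbb{S}^{d-1}} \sigma(\rd s) \int_0^\infty Q_s(\rd v) \int_{v^{1/p}\epsilon}^\infty 1_B(rv^{-1/p}s)\, r^{-1} e^{-r^p} \rd r.
\end{equation*}

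Next, in the innermost integral I will perform the substitution $u = rv^{-1/p}$, so that $\rd r / r = \rd u / u$, the exponential becomes $e^{-u^p v}$, and the lower limit $r = v^{1/p}\epsilon$ becomes $u = \epsilon$. After this the integrand no longer depends on $v$ except through $e^{-u^p v}$, so Fubini's theorem lets me swap the $v$ and $u$ integrals and recognize
\begin{equation*}
\int_0^\infty e^{-u^p v}\, Q_s(\rd v) = q(u^p, s).
\end{equation*}
The resulting expression is exactly $\lambda^{-\epsilon}\tilde\nu^\epsilon(B)$, which gives the claim.

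There is no real obstacle here; the only thing to check carefully is that Fubini applies, which follows from nonnegativity of the integrand, and that $\lambda^\epsilon < \infty$ so that $\sigma_p$ and $\tilde\nu^\epsilon_p$ are genuine probability measures. The latter is implicit in the fact that $\tilde\nu^\epsilon$ is the restriction of a L\'evy measure to $\{|x| > \epsilon\}$, hence finite. Thus the proof reduces to the chain of substitutions above.
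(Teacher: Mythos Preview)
Your proposal is correct and follows essentially the same route as the paper's proof: the paper also unpacks the hierarchy (via the tower property of conditional expectation), performs the radial substitution $r \mapsto rv^{-1/p}$ to move the $v$-dependence into the exponential, and lets the $\ell(\epsilon v^{1/p})$ and $k_\epsilon(s)$ factors cancel before identifying the result with $\tilde\nu^\epsilon_p(B)$. The only cosmetic difference is that the paper writes the computation as nested conditional expectations rather than a single triple integral, and leaves the final Fubini swap implicit.
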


There is no general approach for simulating from $\sigma_p$ and $G_V$ as they depend on the measures $\sigma$ and $Q_s $. However, see \cite{Grabchak:2020}, \cite{Xia:Grabchak:2022}, and the references therein for discussions of simulation from distributions on $\mathbb S^{d-1}$. We now develop a rejection sampling method to simulate from $G_R$.  Toward this end we introduce two distributions that will serve as our proposal distributions. Let
$$
 h_{1}(x;a,p) = p x^{p-1} e^{a^p -x^p} 1_{[x \ge a]}
 $$
be a pdf, where $a,p>0$ are parameters, and let 
$$
 h_{2}(x;a,p,\beta) = \beta\frac{x^{-1}}{\log(1/a)}  1_{[a \leq x < 1]}  + (1-\beta)p x^{p-1} e^{1-x^p} 1_{[x \geq 1]}
 $$
be a pdf, where $a,\beta \in(0,1)$, and $p>0$ are parameters. We can simulate from these distributions as follows.

\begin{prop}\label{prop:ub density}
Fix $a>0$, $p>0$, $\beta\in(0,1)$, and let $U\sim U(0,1)$. We have
$$
\left( a^p  - \log ( U ) \right)^{\frac{1}{p}} \sim h_{1}(\cdot;a,p).
$$ 
Furthermore, if $a\in(0,1)$, then
$$
 a^{(1 - U/\beta)}1_{[U \leq \beta]} +  \left( 1 - \log(1-U)+\log(1-\beta) \right)^{\frac{1}{p}}1_{[U > \beta]} \sim h_{2}(\cdot;a,p,\beta).
$$
\end{prop}
 
 If $a\ge1$, we can check that
 $$
 g_R(x) \le C_1 h_{1}(x;a,p),\ \mbox{ where }\   C_1 = \frac{1}{e^{a^p}p\ell(a)}.
 $$
Similarly, if $a\in(0,1)$, we can check that
 $$
 g_R(x) \le C_2 h_2(x;a,p,\beta),\ \mbox{ where }\  C_2 = \frac{1}{\ell(a)}\max\left\{\frac{1}{ep(1-\beta)},\frac{\log(1/a)}{\beta} \right\}.
 $$
From here we can derive the following rejection sampling algorithms. Let
$$
\phi_1(x) = x^{-1}1_{[x \ge a^p]}
$$
and let
$$
\phi_2(x) = \frac{1}{\max\left\{\frac{1}{ep(1-\beta)},\frac{\log(1/a)}{\beta} \right\}\left( \beta\frac{e^{x^p}}{\log(1/a)}  1_{[a \leq x < 1]} + (1-\beta) ep x^{p} 1_{[x \geq 1]} \right)}.
$$\\

\noindent{\bf Algorithm 1}: Simulation from $G_R$ when $p>0$ and $a\ge1$.\\
\textbf{Step 1.} Simulate $U_1,U_2 \iid U(0,1)$ and set $X=\left( a^p  - \log ( U_1 ) \right)$.\\
\textbf{Step 2.} If $U_2\le\phi_1(X)$ return $X^{1/p}$, otherwise go back to Step 1.\\

In this case, the probability of rejection on a given iteration is $1/C_1=e^{a^p}p\ell(a)$.\\

\noindent{\bf Algorithm 2}: Simulation from $G_R$ when $p>0$ and $a\in(0,1)$; $\beta\in(0,1)$ is a tuning parameter.\\
\textbf{Step 1.} Simulate $U_1,U_2 \iid U(0,1)$ and set
$$
X= a^{1 - U_1/\beta}1_{[U_1 \leq \beta]} +  \left( 1 - \log(1-U_1)+\log(1-\beta) \right)^{\frac{1}{p}}1_{[U_1 > \beta]}.
 $$
\textbf{Step 2.} If $U_2\le\phi_2(X)$ return $X$, otherwise go back to Step 1.\\

In this case, the probability of rejection on a given iteration is $1/C_2$. We will generally take $a=\epsilon V^{1/p}$ with $\epsilon$ small. Thus, we are most interested in the case when $a\to0$. By l'H\^opital's rule, we have
$$
\lim_{a\to0} \frac{1}{C_2} = \beta\lim_{a\to0} \frac{\ell(a)}{\log(1/a)}= \beta\lim_{a\to0} \frac{ \int_a^\infty r^{-1} e^{-r^p } \rd r }{-\log(a)} = \beta\lim_{a\to0} \frac{ - a^{-1} e^{-a^p } }{-a^{-1}} =\beta.
$$
Thus, for small $a$ we can select a large $\beta$ to get a good performance. In general, one can select whichever value of $\beta$ maximizes the acceptance probability in a given situation. For simplicity, throughout this paper we take $\beta=1/2$, which leads to a reasonable performance in the situations considered. We now summarize our algorithm to simulate $\tilde X^\epsilon$, the compound Poisson process of large jumps. It combines \eqref{eq: comp pois rep 2} with Algorithms 1 and 2.\\

\noindent\textbf{Algorithm 3:} For $\epsilon>0$, simulate $\tilde X^\epsilon$, the compound Poisson process of large jumps, up to time $T>0$. Fix the tuning parameter $\beta\in(0,1)$.
     \begin{enumerate}
        \item[I.] Simulate $N \sim \mathrm{Pois} (T\lambda^\epsilon)$.
        \item[II.] Simulate $U_1,U_2,\dots,U_N\iid U(0,1)$.
        \item[III.] For $i = 1, 2, \dots, N$:
            \begin{enumerate}
                \item[1.] Simulate $S_i \sim \sigma_p$.
                \item[2.]  Given $S_i$, simulate $V_i \sim G_V(\cdot;S)$.
            	\item[3.] Given $S_i$ and $V_i$, simulate $R_i \sim G_R(\cdot;V^{1/p}\epsilon)$ as follows:
                    \begin{enumerate}
                        \item[(a)] If $\epsilon V_i^{1/p} \ge 1$: 
                         	\begin{enumerate} 
                        		\item[(a.1)] Generate $U'_1, U'_2 \iid U(0,1)$ and set $X_i = \left( \epsilon^p V_i - \log U'_1  \right)$. \\
                         		\item[(a.2)] If $U'_2 \leq X_i^{-1}$ set $R_i = X_i^{1/p}$, otherwise go back to (a.1).
                        \end{enumerate}
                        	\item[(b)] If $\epsilon V_i^{\frac{1}{p}} < 1$: 
                         \begin{enumerate} 
                        		\item[(b.1)] Generate $U'_1, U'_2 \iid U(0,1)$. 
                        		\item[(b.2)] If $U'_1\le\beta$ set $X_i = \left(\epsilon V_i^{1/p}\right)^{1 - U'_1/\beta}$, otherwise set $$X_i=\left[ 1 - \log(1-U'_1) + \log(1-\beta) \right]^{1/p}.$$
                         \item[(b.3)]  If $U'_2 \leq \phi_2(X_i)$ set $R_i = X_i$, otherwise go back to (b.1). 
                        \end{enumerate}
                    \end{enumerate}
            	\item[4.]  Set $W_i = R_i V_i^{-1/p} S_i$.
            \end{enumerate}
        \item[IV.] For any $t\in[0,T]$, set $\tilde X^\epsilon_t = \sum_{i=1}^{N} W_i 1_{[0, t/T]}(U)$.
     \end{enumerate}

We now give a small simulation study to illustrate the performance of this algorithm and the approximation in \eqref{eq: approx decomp}. For simplicity, we focus of the bivariate case with $p = 1$, drift $\gamma=0$, $Q_{s} = \delta_{1}$ for each $s\in\mathbb S^{d-1}$, and we let $\sigma$ be a discrete uniform probability measure on $n$ evenly spaced points in $\mathbb S^1$. Specifically, we take $\sigma = \frac{1}{n} \sum_{i=1}^n \delta_{s_i}$, where $s_i=(\cos\theta_i,\sin\theta_i)$ with $\theta_i = \frac{2\pi}{n} (i-1)$, $i = 1, 2, \dots, n$. In this case $\lambda^\epsilon = k_\epsilon(s) = \ell(\epsilon) = \Gamma(0, \epsilon)$ and the L\'evy measure in \eqref{eqn:PT0S} simplifies to
\begin{equation*}
	\nu(B) = \frac{1}{n} \sum_{i=1}^n \int_0^{\infty} 1_{B}(rs_i) e^{-r}r^{-1} \rd r , \ \ \  B \in \mathfrak{B}(\mathbb{R}^d).
\end{equation*} 
Here, $G_V(\rd v;s) = \delta_1(\rd v)$, which means that $V\sim G_V(\cdot;s)$ if and only if $V = 1$ with probability $1$. Next, note that $\sigma_p =\sigma$, which means that $\sigma_p$ is discrete uniform and we can simulate from it using a standard approach. For concreteness we take $n=30$ to be the number of points in the support of $\sigma_p$. To simulate from $G_R$ we take the tuning parameter $\beta=1/2$.

Since we are in the bivariate case, we write $X_t = (X_{1,t},X_{2,t})$ and $\tilde X^\epsilon_t = (\tilde X^\epsilon_{1,t},\tilde X^\epsilon_{2,t})$ to denote the GMGD L\'evy process and the compound Poisson process of large jumps, respectively, at time $t$. It is readily checked that
$$
\begin{array}{ll}
    	\rE\left[\tilde X_{1,t}^{\epsilon}\right] = t e^{-\epsilon} \frac{1}{n} \sum_{i=1}^n \cos \theta_i,& \rE\left[\tilde X_{2,t}^{\epsilon}\right] = t e^{-\epsilon} \frac{1}{n} \sum_{i=1}^n \sin \theta_i, \\
    	\mathrm{Var}\left(\tilde X_{1,t}^{\epsilon}\right) = t (\epsilon + 1) e^{-\epsilon} \frac{1}{n} \sum_{i=1}^n \cos^2 \theta_i , &
    	\mathrm{Var}\left(\tilde X_{2,t}^{\epsilon}\right) = t (\epsilon + 1) e^{-\epsilon} \frac{1}{n} \sum_{i=1}^n \sin^2 \theta_i ,
	    \end{array}
	    $$
	    and 
	    $$
	        	\mathrm{Cov}\left(\tilde X_{1,t}^{\epsilon}, \tilde X_{2,t}^{\epsilon}\right) = t (\epsilon + 1) e^{-\epsilon} \frac{1}{n} \sum_{i=1}^n \cos \theta_i \sin \theta_i.
	    $$
We can similarly calculate the means, variances, and the covariance on the components of $X_t$. The formulas are the same, but with $\epsilon=0$. Note that all of these quantities scale linearly in $t$.

   \begin{figure}\label{fig: paths}
	\center
	\begin{tabular}{c}
		\includegraphics[width=0.7\textwidth]{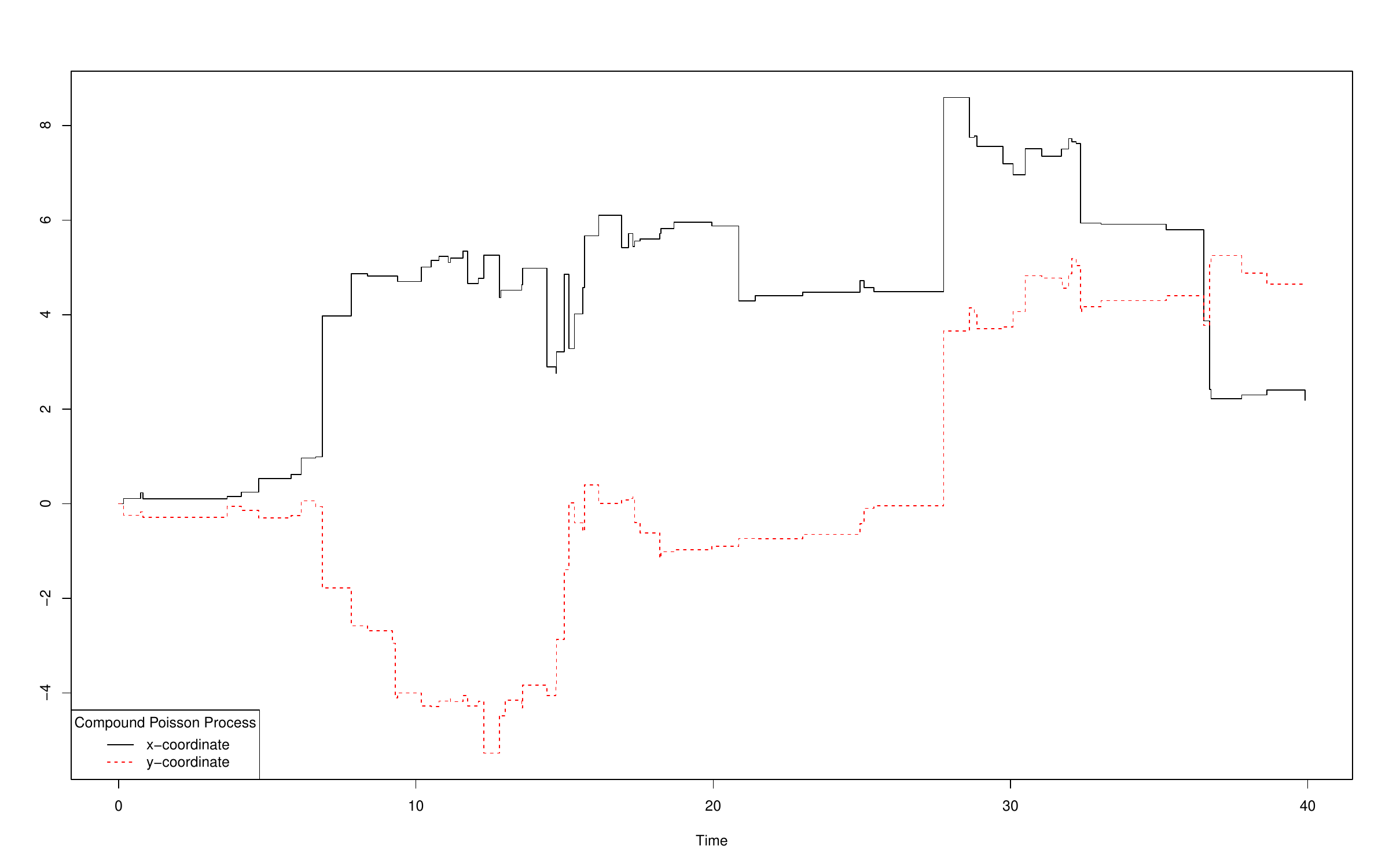}\vspace{-.2cm} \\ (a)\\
		 \includegraphics[width=0.7\textwidth]{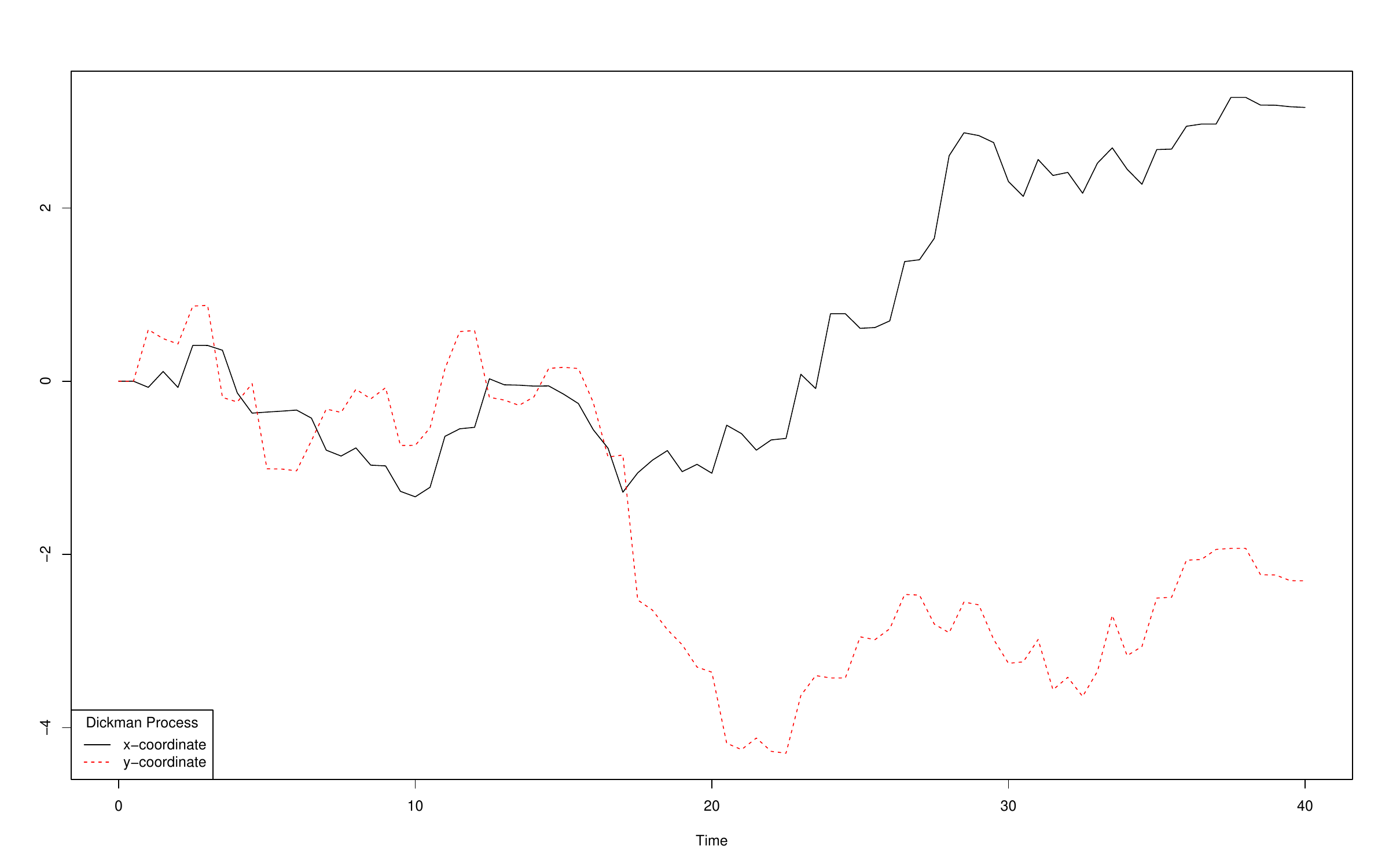} \vspace{-.2cm}\\
				(b)\\
		\includegraphics[width=0.7\textwidth]{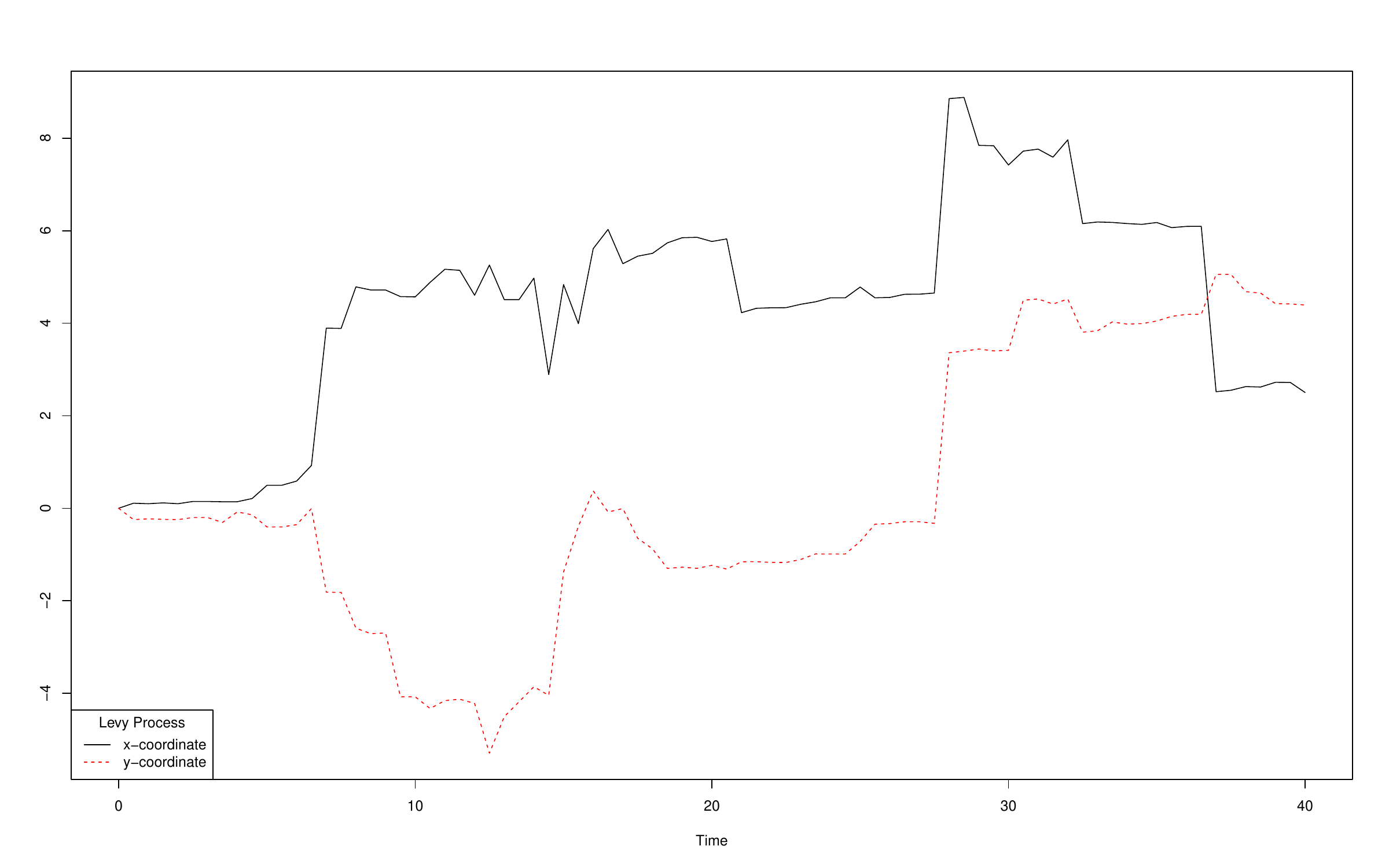} \vspace{-.2cm}\\
		(c)
	\end{tabular}
	\caption{(a) gives a simulated path of $\tilde X^\epsilon$, the compound Poisson process of large jumps. (b) gives a simulated path of $Y^1$, the multivariate Dickman L\'evy process used to approximate the small jumps. (c) gives the simulated path of $X = \tilde X^\epsilon+\epsilon Y^1$. In plots  (a) and (c) we take $\epsilon=0.1$.}
\end{figure}

In Figure 1 we plot a sample path of the process. First, in Figure 1(a) we plot a path of $\tilde X^\epsilon$, the compound Poisson process of large jumps, which was simulated using Algorithm 3. Then, in Figure 1(b) we plot a path of $Y^1$, the Multivariate Dickman L\'evy process used to approximate the small jumps. This was simulated using \eqref{eq: sim MD process}, where we truncate the infinite sum at $10,000$. Finally, in Figure 1(c) we plot a path of $X = \tilde X^\epsilon+\epsilon Y^1$. This path is based on the paths presented in Figures 1(a) and 1(b). In these simulations we take $\epsilon=0.1$.

Next, we performed a small simulation study to better understand the error in our approximation. Toward this end we simulated $N=500,000$ paths of the L\'evy process $X$. For each time $t$, let $m_1(t)$ and $s^2_1(t)$ and $m_2(t)$ and $s^2_2(t)$ be the sample means and sample variances for the first and second components, respectively, and let $s_{1,2}(t)$ be the sample covariance. Now, set
    \begin{eqnarray*}
        \text{ErrMean}_i(t) &=& \frac{\left|\rE\left[X_{i,t}\right] - m_i(t)\right|}{t}, \ \ i=1,2\\
        \text{ErrVar}_i(t) &=& \frac{\left|\mathrm{Var}\left(X_{i,t}\right) - s^2_i(t)\right|}{t}, \ \ i=1,2\\
        \text{ErrCov}(t) &=& \frac{\left|\mathrm{Cov}\left( X_{1,t}, X_{2,t}\right) - s_{1,2}(t)\right|}{t}
    \end{eqnarray*}
 to be the errors in our estimates. Note that we divide by $t$ since the theoretical values scale linearly in $t$. We then combine these into one total error term given by
 $$
 \mathrm{TotalError}(t) = \left( \mathrm{ErrMean}_1(t)^2 + \mathrm{ErrMean}_2(t)^2  + \mathrm{ErrVar}_1(t)^2  + \mathrm{ErrVar}_2(t)^2 + \mathrm{ErrCov}(t)^2 \right)^{1/2}.
$$
Furthermore, to understand the performance of Algorithm 3 in simulating $\tilde X^\epsilon$, we performed a similar simulation study. We again used $N=500,000$ paths and quantified the error analogously, but now using the formulas for the means, variances, and the covariance that are appropriate for this process.

The results of these simulations are presented in Figure 2. In Figure 2(a), we can see that the error in simulating $\tilde X^\epsilon$ is small for all $\epsilon$'s considered. This is not surprising as Algorithm 3 is exact for all choices of $\epsilon$. In Figure 2(b) we see that our approximate method for simulating the process $X$ works well for small $\epsilon$. We note that the difference in the error between $\epsilon=0.1$ and $\epsilon=0.01$ is small, suggesting that $\epsilon=0.1$ is a good choice for this process. In Figure 2(c) we fix $\epsilon=0.1$ and compare the performance of our approach of taking $X \approx \tilde X^\epsilon+\epsilon Y^1$ against a potential approach of just removing the small jumps and taking $X\approx\tilde X^\epsilon$. We can see that the approach where we model the small jumps using a Dickman L\'evy process has significantly less error.

\begin{figure}
	\center
	\begin{tabular}{c}
		\includegraphics[width=0.7\textwidth]{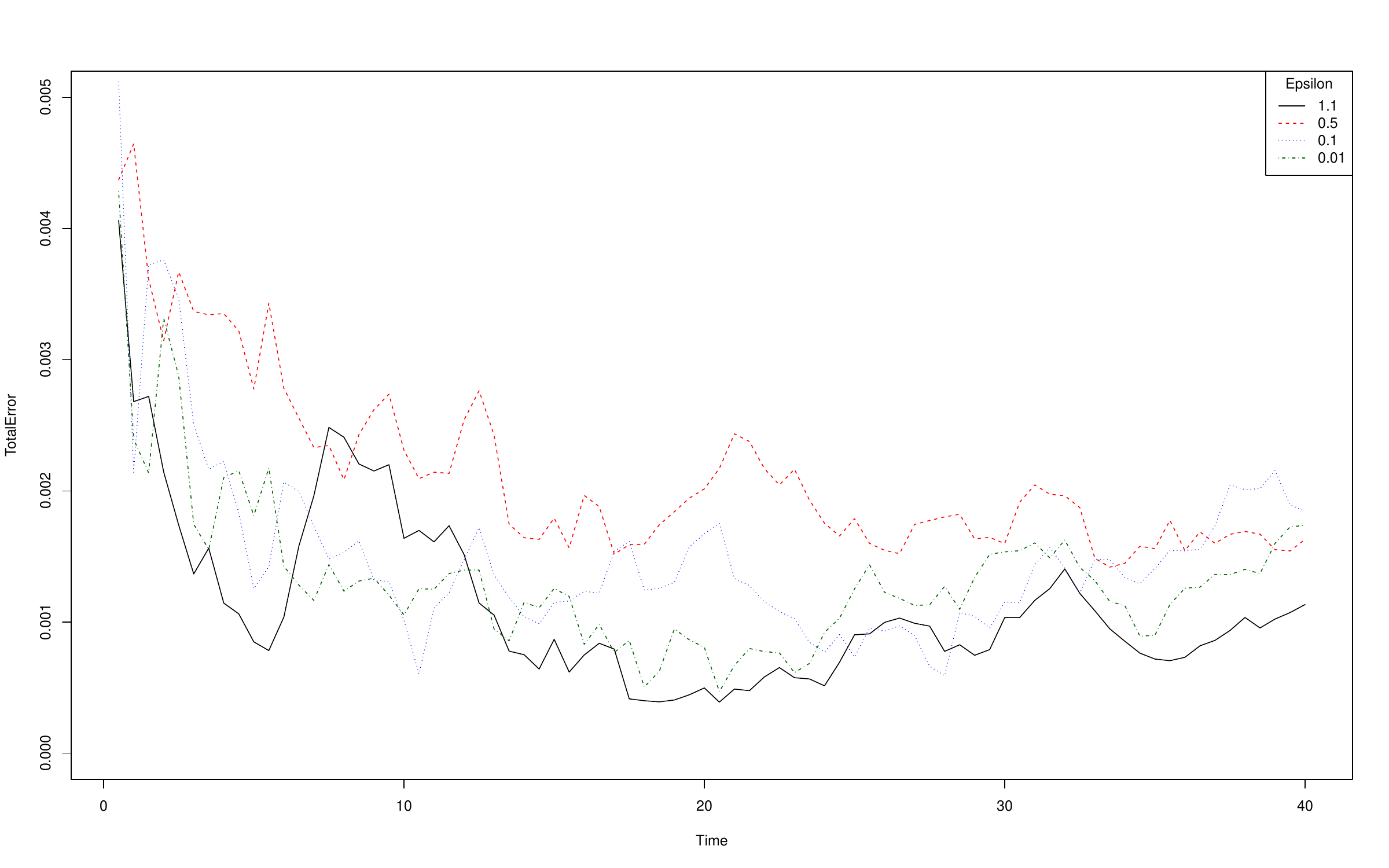} \vspace{-.2cm}\\ (a)\\
		 \includegraphics[width=0.7\textwidth]{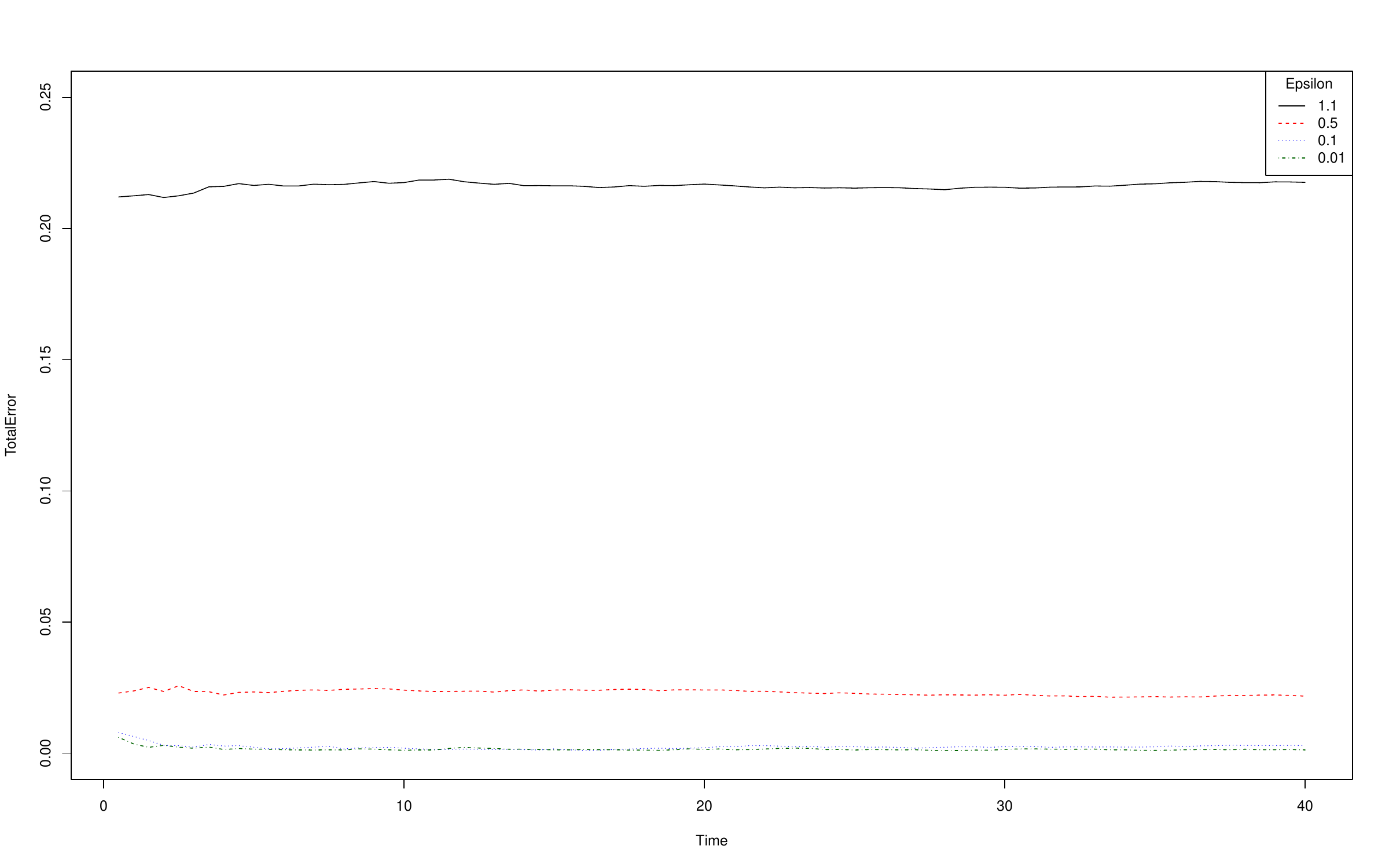} \vspace{-.2cm}\\
		(b)\\
		 \includegraphics[width=0.7\textwidth]{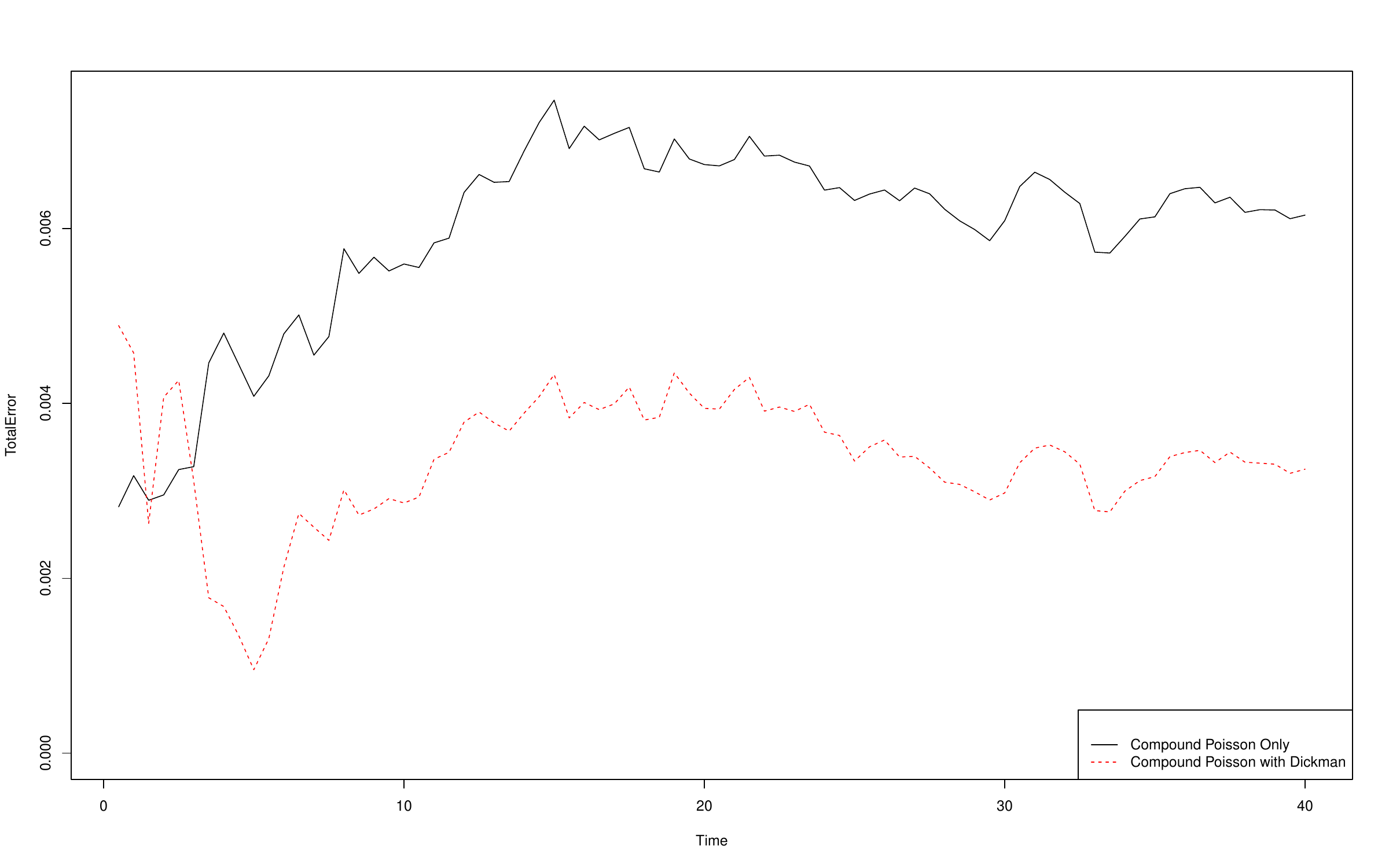}\vspace{-.2cm} \\
		(c)\vspace{-.2cm}
	\end{tabular}
	\caption{Plots of errors. (a) gives the error when simulating $\tilde X^\epsilon$, the process of large jumps, for several choices of $\epsilon$. (b) gives the error when simulating $X$, the L\'evy process of interest, for several choices of $\epsilon$.  (c) compares the error when simulating $X$ by just the process of large jumps with the sum of the process of large jumps and the Dickman approximation to the small jumps. Here we take $\epsilon=0.1$. All plots are based on $N=500,000$ Monte Carlo replications.}
\end{figure}

\section{Proofs}\label{sec: proofs}

\begin{proof}[Proof of Lemma \ref{lemma: Levy measure of MD}]
For the first part, let $Y=X/\gamma$ and note that, by \eqref{eq: relation for MD eps}, 
$$
Y = \frac{1}{\gamma}X \eqd  U^{1/\theta}\left(\frac{1}{\gamma}X+ \frac{\epsilon}{\gamma} \xi\right) = U^{1/\theta}\left(Y+ \frac{\epsilon}{\gamma} \xi\right),
$$
which implies that $X^\epsilon/\gamma=Y\sim \MD^{\epsilon/\gamma}(\sigma)$. For the second part, let $X\sim\mu$ and let $C_\mu(z)$, $z\in\mathbb R$ be the cgf of $\mu$.
By the first part of this lemma, $X/\epsilon\sim \MD(\sigma)$, which is the standard multivariate Dickman distribution and by Theorem 5 in \cite{Grabchak:Zhang:2024} we have $\MD(\sigma)=\ID_0(D^1, 0)$,
where $D^1$ is as in \eqref{eq: Levy meas epsilon} with $\epsilon=1$.  It is easily checked that the cgf of $X/\epsilon$ is given by $C_\mu(z/\epsilon)$, $z\in\mathbb R$.
From here \eqref{eq: char func inf div finite variation}
implies that the cgf of $X/\epsilon$ is given, for any $z \in \mathbb{R}^d$, by
\begin{eqnarray*}
	C_\mu(z/\epsilon)&=& \int_{\mathbb{S}^{d-1}}\int_0^1\left(e^{i\left\langle z, s\right\rangle r} - 1\right) r^{-1}\rd r \sigma(\rd s)
\end{eqnarray*}
and thus that
\begin{eqnarray*}
	C_\mu(z)&=& \int_{\mathbb{S}^{d-1}}\int_0^1\left(e^{i\left\langle z, s\right\rangle r\epsilon} - 1\right) r^{-1}\rd r \sigma(\rd s)\\
	&=& \int_{\mathbb{S}^{d-1}}\int_0^\epsilon\left(e^{i\left\langle z, s\right\rangle r} - 1\right) r^{-1}\rd r \sigma(\rd s) = \int_{\mathbb{R}^d}\left(e^{i\left\langle z, x\right\rangle } - 1\right) D^\epsilon(\rd x),
\end{eqnarray*}
which the the cgf of $ \ID_0(D^\epsilon, 0)$ as required. Note that, in the above, the second equality follows by change of variables.  
\end{proof}

\begin{proof}[\textbf{Proof of Proposition \ref{prop:equivalent condition}}] 
The equivalence between Conditions 1 and 2 follows easily from Lemma 4.9 in \cite{Grabchak:2016}. We just note that for every $h>0$ we have $D^1(\{x\in\mathbb R^d:|x|=h\})=0$, 
$$
\nu((\epsilon h, \epsilon]C) = M^\epsilon\left(\left\{x\in\mathbb R^d:|x|>h, \frac{x}{|x|}\in C\right\}\right),
$$
and
$$
\sigma(C)\log \frac{1}{h} = D^1\left(\left\{x\in\mathbb R^d:|x|>h, \frac{x}{|x|}\in C\right\}\right).
$$

We now show that Condition 1 implies Condition 3. First fix $p,\epsilon>0$, $N \in \mathbb{N}$,  $C \in \mathfrak{B}(\mathbb{S}^{d-1})$ with $\sigma(\partial C)=0$, and let $\epsilon_k = 2^{-(k-1)} \epsilon$. We have
		\begin{eqnarray*}
		\int_{(2^{-N},1]C} |x|^p M^\epsilon(\rd x) &=&
			\frac{1}{\epsilon^p}\int_{(\frac{\epsilon}{2^N},\epsilon]C}|x|^p\nu(\rd x) \\
			&=& \sum \limits_{k=1}^{N}\frac{1}{\epsilon^p} \int_{\big(\frac{\epsilon}{2^k},\frac{\epsilon}{2^{k-1}}\big]C} |x|^p \nu(\rd x)\\
		&=& \sum \limits_{k=1}^{N}\frac{1}{2^{p(k-1)} \epsilon^p_k} \int_{\big(\frac{1}{2}\epsilon_k ,\epsilon_k\big]C} |x|^p \nu(\rd x)\\
			&=&\sum \limits_{k=1}^{N} 2^{-p(k-1)} \int_{(\frac{1}{2},1]C} |x|^p M^{\epsilon_k}(\rd x).
		\end{eqnarray*}
By the version of the Portmanteau Theorem given in \cite{Barczy:Pap:2006}, for any $h\in(0,1)$ we have
$$
\int_{(h,1]C} |x|^p M^\epsilon(\rd x)\to \int_{(h,1]C} |x|^p D^1(\rd x)= \int_{C}\int_h^1 r^{p-1}\rd r\sigma(\rd s) =\frac{\sigma(C)}{p}(1-h^{p})
$$
as $\epsilon\downarrow0$. It follows that, for every $\theta>0$ there exists a $\delta>0$ such that if $0<\epsilon<\delta$, then
$$
\left| \int_{(h,1]C} |x|^p M^\epsilon(\rd x)- \frac{\sigma(C)}{p}(1-h^{p})\right|\le\frac{\theta}{\sum_{k=1}^\infty 2^{-p(k-1)}}.
$$
Taking $h=1/2$, $0<\epsilon<\delta$, noting that $\epsilon_k\in(0,\epsilon]$, and applying the triangle inequality gives
$$
\left| \int_{(2^{-N},1]C} |x|^p M^\epsilon(\rd x)- \sum_{k=1}^N \left(\frac{1}{2^p}\right)^{k-1} \frac{\sigma(C)}{p}(1-1/2^{p})\right|\le \theta.
$$
Now taking the limit as $N\to\infty$ and applying monotone convergence gives
$$
\left| \frac{1}{\epsilon^p}\int_{(0,\epsilon]C} |x|^p \nu (\rd x) - \frac{\sigma(C)}{p}\right|=\left| \int_{(0,1]C} |x|^p M^\epsilon(\rd x) - \frac{\sigma(C)}{p}\right|\le \theta.
$$
Condition 3 is now proved. Note that we did not assume $ \frac{1}{\epsilon^p}\int_{(0,\epsilon]C} |x|^p \nu (\rd x) <\infty$. The fact that this is finite is part of the result.

It is immediate that Condition 3 implies Condition 4. We now show that Condition 4 implies Condition 2, which will complete the proof. Define $\eta_{\epsilon}$ and $\eta$ to be finite Borel measures on $\mathbb R^d$ such that, for $B \in \mathfrak{B}(\mathbb{R}^d)$, we have
	\begin{equation*}
		\eta_{\epsilon}(B) = \int_{B} |x|^p  M^{\epsilon}(\rd x)\ \mbox{ and } \ \eta(B) = \int_{B} |x|^p D^1(\rd x).
	\end{equation*}
Fix $C \in \mathfrak{B}(\mathbb{S}^{d-1})$ with $\sigma(\partial C)=0$,  $h>0$, and $\epsilon>0$. Let $h_0=h\wedge1$ and note that
	\begin{eqnarray*}
		\eta_{\epsilon}\left((0,h]C \right)=\int_{(0,h]C} |x|^p M^{\epsilon}(\rd x) 
		=\int_{(0,\epsilon h]C}\frac{|x|^p}{\epsilon^p} \nu^{\epsilon}(\rd x)
		=\frac{1}{\epsilon^p} \int_{(0,\epsilon h_0]C} |x|^p \nu(\rd x) 
	\end{eqnarray*}
and
	\begin{eqnarray*}
		\eta\left((0,h]C \right)=\int_{(0,h]C} |x|^p D^1(\rd x) 
		= \int_{C} \int_{0}^{h_0} |rs|^pr^{-1} \rd r \sigma (\rd s)
		=h_0^p \frac{\sigma(C)}{p},
	\end{eqnarray*}
where we use the fact that $|s|=1$ for every $s\in\mathbb S^{d-1}$. Since Condition 4 holds
	\begin{eqnarray*}
		\lim\limits_{\epsilon \downarrow 0}\eta_{\epsilon}\left((0,h]C \right)
		=h_0^p\lim\limits_{\epsilon \downarrow 0} \frac{1}{h_0^p\epsilon^p} \int_{(0,\epsilon h_0]C} |x|^p\nu(\rd x) 
		=h_0^p\frac{\sigma(C)}{p}
		=\eta\left((0,h]C\right).
	\end{eqnarray*}
Hence, if $0<h<1$
	\begin{eqnarray*}
		\eta_{\epsilon}\left((h,\infty)C\right)=\eta_{\epsilon}\left((h,1]C\right)&=&\eta_{\epsilon}\left((0,1]C\right)-\eta_{\epsilon}\left((0,h]C\right)\\
		&\rightarrow& \eta\left((0,1]C\right)-\eta\left((0,h]C\right)
	=\eta\left((h,1]C\right) = \eta\left((h,\infty)C\right)
	\end{eqnarray*}
and if $h\ge1$
	\begin{eqnarray*}
		\eta_{\epsilon}\left((h,\infty)C\right)=0 = \eta\left((h,\infty)C\right).
	\end{eqnarray*}
From here Lemma 4.9 in \cite{Grabchak:2016} implies that $\eta_{\epsilon}\conv\eta$ as $\epsilon\downarrow0$. Next, noting that $\frac{1}{|x|^p}$ is bounded and continuous when away from zero and applying the Portmanteau Theorem of \cite{Barczy:Pap:2006} show that, for any $h\in(0,1)$,
	\begin{eqnarray*}
		\lim_{\epsilon\downarrow0} \nu((\epsilon h,\epsilon])  = \lim_{\epsilon\downarrow0}\int_{(h,1]C}\frac{1}{|x|^p}\eta_{\epsilon}(\rd x) = \int_{(h,1]C}\frac{1}{|x|^p}\eta(\rd x)= D^1\left((h,1]C\right) = \sigma(C) \log\frac{1}{h},
	\end{eqnarray*}
which gives the result.
\end{proof}

\begin{proof}[\textbf{Proof of Theorem \ref{thrm: convergence}}]
We only prove the first part, as the second part follows immediately by combining the first part with Slutsky's Theorem. Theorem 15.17 in \cite{Kallenberg:2002} implies that $\epsilon^{-1}{X^{\epsilon}} \cond  Y^1$ if and only if $\epsilon^{-1}{X_1^{\epsilon}} \cond  Y^1_1$. By a version of  Theorem 15.14 in \cite{Kallenberg:2002} (see also Theorem of 8.7 of \cite{Sato:1999} or Theorem 3.1.16 in \cite{Meerschaert:Scheffler:2001}) we have $\epsilon^{-1}X^{\epsilon}_1 \cond  Y^1_1$ if and only if the following three conditions hold:
	\begin{itemize}
\item [\textbf{1.}] $M^{\epsilon}\conv D^1$ as $\epsilon\downarrow0$;
\item[\textbf{2.}] $\lim\limits_{\delta \downarrow 0} \limsup\limits_{\epsilon \downarrow 0} \int_{|x| \leq \delta} \langle z,x \rangle^2 M^{\epsilon}(\rd x) =0$ for every $z\in\mathbb R^d$;
\item[\textbf{3.}] $\lim\limits_{\epsilon \downarrow 0}\int_{|x|\le 1} x M^{\epsilon}(\rd x) = \int_{|x|\le1} x D^1(\rd x)$.
	\end{itemize}
Condition 1 is one of the equivalent conditions in Proposition \ref{prop:equivalent condition}. To complete the proof, we will show that it implies Conditions 2 and 3. To show that it implies Condition 2, note that
 \begin{eqnarray*}
		0 &\leq& \lim\limits_{\delta \downarrow 0} \limsup_{\epsilon \downarrow 0} \int_{|x| \leq \delta} \langle z,x \rangle^2 M^{\epsilon}(\rd x) \\
		&\leq& \lim\limits_{\delta \downarrow 0} \limsup_{\epsilon \downarrow 0} \int_{|x| \leq \delta} |z|^2 |x|^2 M^{\epsilon}(\rd x) \\
		&\leq& \lim\limits_{\delta \downarrow 0} \limsup_{\epsilon \downarrow 0} |z|^2 \delta \int_{|x| \leq 1} |x| M^{\epsilon}(\rd x) \\
		&=&|z|^2 \lim\limits_{\delta \downarrow 0}  \limsup_{\epsilon \downarrow 0}\delta \int_{|x| \leq \epsilon} \frac{|x|}{\epsilon} \nu(\rd x) \\
		&=&|z|^2\sigma(\mathbb{S}^{d-1})\lim\limits_{\delta \downarrow 0}\delta =0,
	\end{eqnarray*}	
where the last line follows by Proposition \ref{prop:equivalent condition}. Condition 3 can be shown in a manner similar to how we showed that Condition 1 implies Condition 3 in the proof of Proposition \ref{prop:equivalent condition}. There are two main differences. First, we take $\int_C s\sigma(\rd s)$ instead of $\sigma(C)$. Second, we no longer take the norm of $x$ in the integral and must use the dominated convergence theorem instead of monotone convergence when taking the limit in $N$. We can use dominated convergence here since we have assumed that $\int_{|x|\le 1} |x| \nu(\rd x)<\infty$.
\end{proof}

\begin{proof}[\textbf{Proof of Corollary \ref{cor:density}}]
Note that for any $C \in \mathfrak B(\mathbb{S}^{d-1})$
	\begin{eqnarray*}
		\lim_{\epsilon \downarrow 0} \frac{1}{\epsilon} \int_{(0,\epsilon]C} |x| \nu(\rd x) &=& \lim_{\epsilon \downarrow 0} \frac{1}{\epsilon} \int_0^{\epsilon} \int_C r \rho(r,s) \sigma(\rd s) \rd r\\
		&=&  \lim_{\epsilon \downarrow 0} \frac{1}{\epsilon} \int_0^{\epsilon} \int_C \left(r \rho(r,s)- g(s)\right)\sigma(\rd s) \rd r+ \lim_{\epsilon \downarrow 0} \frac{1}{\epsilon} \int_0^{\epsilon} \int_C g(s) \sigma(\rd s) \rd r\\
		&=&\int_C g(s) \sigma(\rd s) = \sigma_g(C),
	\end{eqnarray*}	
where the last line follows from the fact that $\left|\int_C (r \rho(r,s) - g(s)) \sigma(\rd s) \right|\leq \int_{\mathbb{S}^{d-1}} |r \rho(r,s) - h(s)| \sigma(\rd s)\to 0$ as $r\downarrow0$. From here the result follows by Proposition \ref{prop:equivalent condition} and Theorem \ref{thrm: convergence}.
\end{proof}

\begin{proof}[\textbf{Proof of Corollary \ref{cor: decomp}}] 
The idea of the proof is similar to that of Theorem 3.1 in \cite{Cohen:Rosinski:2007}. By Theorem \ref{thrm: convergence}, $\frac{X^{\epsilon}}{\epsilon} \overset{d}{\rightarrow} Y^1$, and so, by Theorem 15.17 of \cite{Kallenberg:2002}, there exists a family of L\'evy processes $\hat X^\epsilon = \{\hat X_t^{\epsilon}: t\geq0\}$, $\epsilon>0$, such that 
\begin{equation*}
\hat X^{\epsilon} \overset{d}{=} \frac{X^{\epsilon}}{\epsilon}
\end{equation*}
for each $\epsilon>0$, and satisfying
\begin{equation*}
 \sup\limits_{t\in[0,T]}\left |\hat X_t^{\epsilon}- Y^1_t\right | \conp 0\ \mbox{ as }\ \epsilon \downarrow 0
	\end{equation*}
for each $T>0$. We can (and will) take $\hat X^{\epsilon}$ to be independent of $\tilde X^\epsilon$, possibly on an enlarged probability space, e.g., where $Y^1$ and $\hat X_t^{\epsilon}$ depend on different coordinates from $\tilde X^\epsilon$. Next, for $t>0$ set
	\begin{equation*}
		Z^{\epsilon}_t=\epsilon\left(\hat X_t^{\epsilon}- Y^1_t\right)
	\end{equation*}
and note that \eqref{eq: Zt epsilon to 0} holds for each $T>0$. Note further, that $\{Z_t:t\ge0\}$ is a c\`adl\`ag process, since it is the difference of two L\'evy processes and L\'evy processes have c\`adl\`ag  paths. We have
	\begin{eqnarray*}
		X = X^{\epsilon}+\tilde X^{\epsilon}+\gamma^* 
		\eqd \epsilon \hat X^{\epsilon}+\tilde X^{\epsilon}+\gamma^*
		=\epsilon Y^1+\tilde X^{\epsilon}+\gamma^*+Z^{\epsilon},
	\end{eqnarray*}
which completes the proof.
\end{proof}

\begin{proof}[\textbf{Proof of Proposition \ref{prop:zdecomp}}]
    For any $B \in \mathfrak{B}(\mathbb{R}^d)$, by the tower property of conditional expectation
        \begin{eqnarray*}
        \rP (W \in B) &=& \rE\left[\rE[\rE[ 1_B(RV^{-1/p}S) | V,S] |S]\right]\\
        &=& \rE\left[\rE\left[ \frac{1}{\ell(V^{1/p}\epsilon)}\int_{\epsilon V^{1/p}}^\infty 1_B(rV^{-1/p}S)r^{-1}e^{-r^p} \rd r \Bigm| S\right]\right]\\
        &=& \rE\left[\rE\left[ \frac{1}{\ell(V^{1/p}\epsilon)}\int_\epsilon^\infty 1_B(rS)r^{-1}e^{-r^pV} \rd r \Bigm| S\right]\right]\\
        &=& \rE\left[ \int_0^\infty\frac{1}{\ell(v^{1/p}\epsilon)}\int_\epsilon^\infty 1_B(rS)r^{-1}e^{-r^pv} \rd r\frac{\ell(v^{1/p}\epsilon)}{k_\epsilon(S)} Q_S (\rd v) \right]\\
        &=& \frac{1}{\lambda^\epsilon}  \int_{\mathbb{S}^{d-1}}  \int_0^\infty\int_\epsilon^\infty 1_B(rs)r^{-1}e^{-r^pv} \rd r Q_s (\rd v) \sigma(\rd s) =  \tilde{\nu}^\epsilon_p(B)
        \end{eqnarray*}
        as required.
\end{proof}

\begin{proof}[\textbf{Proof of Proposition \ref{prop:ub density}}]
It is easily checked that the cumulative distribution function (cdf) corresponding to $h_{1}(\cdot;a,p)$ is $H_{1}(x;a,p) = 1 - e^{a^p - x^p}$ for $x>a$. Thus, $H_{1}^{-1}(x;a,p) = \left[ a^p - \log \left(1 - x \right) \right]^{\frac{1}{p}}$ for $x\in[0,1]$. From here, the fact that $U\eqd 1-U$ gives the first part.

Next we turn to simulation from $h_{2}$. Note that this distribution is a mixture. With probability $1-\beta$ we must simulate from $h_{1}(\cdot;1,p)$, which we already know how to do. With probability $\beta$ we must simulate from a distribution with pdf 
$
\frac{x^{-1}}{\log(1/a)}  1_{[a \leq x <1]}.
$ 
The corresponding cdf is given by $1-\log (x)/\log(a)$ for $a<x<1$. The inverse function is then $a^{1-x}$ for $x\in[0,1]$. To conclude, we note that the conditional distribution of $U/\beta$ given $U\le\beta$ is $U(0,1)$ and that the conditional distribution of $(1-U)/(1-\beta)$ given $U>\beta$ is $U(0,1)$.
\end{proof}


\begin{thebibliography}{10}

\bibitem{Asmussen:Rosinski:2001}
S.~Asmussen and  J.~Rosi\'nski (2001). Approximations of small jumps of L\'evy processes with a view towards simulation. {\em Journal of Applied Probability}, 38(2), 482--493.

\bibitem{Barczy:Pap:2006}
M.~Barczy and G.~Pap (2006).
\newblock Portmanteau theorem for unbounded measures.
\newblock {\em Statistics and Probability Letters}, 76(17):1831--1835

\bibitem{Barndorff-Nielsen:Maejima:Sato:2006}
O.E.~Barndorff-Nielsen, M.~Maejima, and K.~Sato (2006).
\newblock Some classes of multivariate infinitely divisible distributions admitting stochastic integral representations.
\newblock {\em Bernoulli}, 12(1):1--33.

\bibitem{Barndorff-Nielsen:Pedersen:Sato:2001}
O.E.~Barndorff-Nielsen, J.~Pedersen, and K.~Sato (2001). Multivariate subordination, self-decomposability and stability. {\em Advances in Applied Probability}, 33(1):160--187.

\bibitem{Bhattacharjee:Molchanov:2020}
C.~Bhattacharjee and I.~Molchanov (2020).
\newblock Convergence to scale-invariant Poisson processes and applications in Dickman approximation.
\newblock {\em Electronic Journal of Probability}, 25, Article 79.

\bibitem{Buchmann:etal:2017}
B.~Buchmann, B.~Kaehler, R.~Maller, and A.~Szimayer (2017). Multivariate subordination using generalised gamma convolutions with applications to variance gamma processes and option pricing. {\em Stochastic Processes and Their Applications}, 127(7):2208--2242.
	
	\bibitem{Cohen:Rosinski:2007}
	S.~Cohen and J.~Rosi\'nski (2007). 
	\newblock Gaussian approximation of multivariate L\'evy processes with applications to simulation of tempered stable processes.
	\newblock {\em Bernoulli}, 13(1):195--210.
	
\bibitem{Cont:Tankov:2004}
R.~Cont and P.~Tankov (2004).
\newblock {\em Financial Modeling With Jump Processes}.
\newblock Chapman \& Hall, Boca Raton.	
	
	\bibitem{Covo:2009}
	S.~Covo (2009). 
	\newblock On approximations of small jumps of subordinators with particular emphasis on a Dickman-type limit.
	\newblock {\em Journal of Applied Probability} 46(3):732--755.
	
\bibitem{Grabchak:2012}
M.~Grabchak (2012).
\newblock On a new class of tempered stable distributions: Moments and regular variation.
\newblock {\em Journal of Applied Probability}, 49(4):1015--1035.
	
\bibitem{Grabchak:2015}
M.~Grabchak (2015). Inversions of L\'evy measures and the relation between long and short time behavior of Lévy processes. {\em Journal of Theoretical Probability}, 28(1):184--197.
	
\bibitem{Grabchak:2016}
M.~Grabchak (2016).
\newblock{\em Tempered Stable Distributions: Stochastic Models for Multiscale Processes}. 
\newblock Springer, Cham.

\bibitem{Grabchak:2017}
M.~Grabchak (2017). A simple condition for the multivariate CLT and the attraction to the Gaussian of L\'evy processes at long and short times. {\em Communications in Statistics--Theory and Methods}, 46(1):446--456.

\bibitem{Grabchak:2020}
M.~Grabchak (2020). On the simulation of general tempered stable Ornstein-Uhlenbeck processes. {\em Journal of Statistical Computation and Simulation}, 90(6):1057--1081.

\bibitem{Grabchak:2025}
M.~Grabchak (2025). On the self-similarity of remainder processes and the relationship between stable and Dickman distributions. {\em Mathematics}, 13(6), 907.

\bibitem{Grabchak:Molchanov:Panov:2022}
M.~Grabchak, S.A.~Molchanov, and V.A.~Panov (2022). Around the infinite divisibility of the Dickman distribution and related topics. {\em Zapiski Nauchnykh Seminarov POMI}, 115:91--120.
	
	\bibitem{Grabchak:Zhang:2024}
	M.~Grabchak and X.~Zhang (2024).
	\newblock Representation and simulation of multivariate Dickman distributions and Vervaat perpetuities.
	\newblock {\em Statistics and Computing}, 34(1): Article 28.
	
	\bibitem{Grahovac:etal:2024}
	D.~Grahovac, A.~Kovtun, N.N.~Leonenko, and A.~Pepelyshev (2025). Dickman type stochastic processes with short- and long-range dependence. {\em Stochastics}, 1--22. DOI: 10.1080/17442508.2025.2522789.
	
	\bibitem{Kallenberg:2002}
	O.~Kallenberg (2002).
	\newblock {\em Foundations of Modern Probability}, 2nd Ed. Springer-Verlag, New York.
	
	\bibitem{Kuchler:Tappe:2008}
	U.~K\"uchler and S.~Tappe (2008). Bilateral gamma distributions and processes in financial mathematics. {\em Stochastic Processes and their Applications}, 118(2):261--283.
	
	\bibitem{Laverny:etal:2021}
	O.~Laverny, E.~Masiello, V.~Maume-Deschamps, and D.\ Rulli\`ere (2021). Estimation of multivariate generalized gamma convolutions through Laguerre expansions. {\em Electronic Journal of Statistics}, 15(2):5158--5202.
	
	\bibitem{Madan:Seneta:1990}
	D.B.~Madan and E.~Seneta (1990). The variance gamma (VG) model for share market returns. {Journal of Business}, 63(4):511--524.
	
		\bibitem{Maejima:2015}
M.~Maejima (2015). Classes of Infinitely Divisible Distributions and Examples. In: {\em L\'evy Matters V}. Lecture Notes in Mathematics, vol 2149. Springer, Cham.
	
	\bibitem{Maejima:Nakahara:2009}
	M.~Maejima and G.~Nakahara (2009)
	\newblock A note on new classes of infinitely divisible distributions on $\mathbb{R}^d$.
	\newblock {\em Electronic Communications in Probability}, 14, 358--371.
		
	\bibitem{Meerschaert:Scheffler:2001}
	M.~Meerschaert and H.~Scheffler (2001).
	\newblock {\em Limit Distributions for Sums of Independent Random Vectors: Heavy Tails in Theory and Practice}.
	\newblock Wiley, New York.
	
	\bibitem{Molchanov:Panov:2020}
	S.A.~Molchanov and V.A.~Panov (2020). The Dickman--Goncharov distribution. {\em Russian Mathematical Surveys}, 75(6), 1089.
	
	\bibitem{Penrose:Wade:2004}
	M.~Penrose and A.~Wade (2004).
	\newblock Random minimal directed spanning trees and Dickman-type distributions.
	\newblock {\em Advances in Applied Probability} 36(3):691--714.

\bibitem{Perez-Abreu:Stelzer:2014}
V.~P\'erez-Abreu and R.~Stelzer (2014). Infinitely divisible multivariate and matrix gamma distributions. {\em Journal of Multivariate Analysis}, 130:155--175.

\bibitem{Rosinski:Sinclair:2010}
J.~Rosi{\'n}ski and J.L.~Sinclair (2010).
\newblock Generalized tempered stable processes.
\newblock {\em Banach Center Publications}, 90:153--170.

\bibitem{Sabino:2020}
P.~Sabino (2020). Exact simulation of variance gamma-related OU processes: Application to the pricing of energy derivatives. {\em Applied Mathematical Finance}, 27(3):207--227.

\bibitem{Sato:1999}
K.~Sato (1999).
\newblock {\em {L}\'evy Processes and Infinitely Divisible Distributions}.
\newblock Cambridge University Press, Cambridge.

	\bibitem{Xia:Grabchak:2022}
	Y.~Xia and M.~Grabchak (2022)
	\newblock Estimation and simulation for multivariate tempered stable distributions.
	\newblock {\em Journal of Statistical Computation and Simulation}, 92(3):451--475.
	
\end{thebibliography}
\end{document}